\def\qed{\hfill $\Box$}
\renewenvironment{proof}[1][\proofname]{\par
  \normalfont
  \topsep6\p@\@plus6\p@ \trivlist
  \item[\hskip\labelsep{\bfseries #1}\@addpunct{\bfseries.}]\ignorespaces
}{
  \endtrivlist
}
\renewcommand{\proofname}{Proof}
\theoremstyle{plain}
\newtheorem{defi}{Definition}[section]
\newtheorem{lem}[defi]{Lemma}
\newtheorem{thm}[defi]{Theorem}
\newtheorem{rem}[defi]{Remark}
\newtheorem{prop}[defi]{Proposition}
\newtheorem{cor}[defi]{Corollary}
\newtheorem{assumption}[defi]{Assumption}
\newcommand{\tr}{\mathop{\rm tr}}
\newcommand{\Var}{\mathop{\rm Var}\nolimits}
\title{Moment convergence of the generalized maximum composite likelihood estimators for determinantal point processes}
\author{Kou Fujimori$^{*}$;\quad Sota Sakamoto$^{\dagger}$;\quad Yasutaka Shimizu$^{\ddagger}$\vspace{2mm}\\
$*$\,{\it Department of Mathematics} \\
$\dagger$\, {\it Graduate School of Fundamental Science and Engineering}\\
$\ddagger$\,{\it Department of Applied Mathematics} \\ {\it Waseda University}}
\date{\today}
\begin{document}
\maketitle

\begin{abstract}
The maximum composite likelihood estimator for parametric models of determinantal point processes (DPPs) is discussed. Since the joint intensities of these point processes are given by determinant of positive definite kernels, we have the explicit form of the joint intensities for every order. This fact enables us to consider the generalized maximum composite likelihood estimator for any order. 
This paper introduces the two step generalized composite likelihood estimator
and shows the moment convergence of the estimator under a stationarity.
Moreover, our results can yield information criteria for statistical model selection within DPPs.
\begin{flushleft}
{\it Key words:} Determinantal point processes; composite likelihood; 2-step estimation; convergence of moments; information criteria \vspace{1mm}\\
{\it MSC2010:} {\bf 62M86}; 60G55, 62F12.
\end{flushleft}
\end{abstract}

\section{Introduction}
Determinantal point processes (DPPs) are the classes of spatial point processes with repulsive properties for each pair of realized points.
DPPs are proposed by \cite{macchi} and studied statistical physics to capture the behavior of fermions.
Moreover, they are studied intensively in the field of probability by, for example, \cite{soshnikov2000}, \cite{hough2009} among others.
In particular, ergodic property and mixing conditions for stationary and isotropic DPPs are studied by \cite{soshnikov2000} and \cite{brillinger mixing}, which enable us to construct asymptotic theories for statistical models of DPPs.

Statistical inferences for DPPs are also studied by some authors.
\cite{lavancier16} proposed some parametric 
models for stationary DPPs and they also provide 
algorithms for numerical simulation and estimation procedure based on the likelihood method and the minimum contrast method by using Ripley's $K$-functions and pairwise correlation functions for these parametric models.
Asymptotic properties of the minimum contrast estimator (MCE) is studied by \cite{cic dpp}. They proved the consistency and the asymptotic normality for MCEs based on the $K$-function and pairwise correlation function.
These estimation procedures are widely used in real data analysis such as the mobile network, the machine learning among others.
However, the theoretical properties for the maximum likelihood estimators for DPPs have not been studied enough up to our knowledge.

In this paper, we introduce the two step maximum composite likelihood estimator for parametric models of DPPs. 
The two step estimation methods are studied by, for example,  \cite{waagepetersen2009} for inhomogeneous spatial point processes.
They constructed the estimators by using second order estimating functions including the composite likelihood functions.
Composite likelihood approach studied by, e.g., \cite{guan2006} is widely used in several applications.
Since well-known point processes such as the Gibbs point process or 
Cox process only have the exact form of joint intensities of second order, 
we often use the second order composite likelihood function.
On the other hand, joint intensities of DPPs are given by the determinant of positive 
definite kernels, which allows us to compute the joint intensities of any order. 
Therefore, we can consider generalized maximum composite likelihood estimator
 (GMCLE) by using $p$-th order joint intensity for every integer $p \geq 2$. 
In this paper, we prove 
the consistency, the asymptotic normality and the moment convergence of the
estimator for stationary DPPs by using polynomial type large deviation inequality which is introduced by \cite{yoshida11}.
In particular, the moment convergence enables us to derive an information criterion.

This paper is organized as follows.
In section \ref{setups}, we provide setups for parametric models of DPPs and some regularity conditions.
We present the definition of the two step generalized maximum composite likelihood estimator and the consistency results for stationary case in Section \ref{estimation}.
Moreover, we discuss the moment convergence of the estimator in 
Section \ref{pldi}.
In Section \ref{numerical}, we show the finite sample performance of 
the second order estimator for well-known parametric models of DPPs, 
which are given by Gaussian, Laplace and Cauchy kernels.

Hereafter, for every $v=(v_1,\ldots,v_k)^\top \in \mathbb{R}^k$,\ $k =1,2\ldots$, we denote by
\[
|v| = \left( \sum_{i = 1}^k v_i^2
\right)^{\frac{1}{2}}.
\]
Similarly, for every tensor $u =(u_{i_1,\ldots,i_m})_{i_1,\ldots,i_m} \in (\mathbb{R}^k)^{\otimes m}$,
we denote that
\[
|u| = \left( \sum_{i_1,\ldots,i_m = 1}^k u_{i_1\ldots,i_m}^2
\right)^{\frac{1}{2}}.
\]
Moreover, for every smooth function 
$f:\ \mathbb{R}^q \rightarrow \mathbb{R}$
we denote its derivatives as follows:
\[
\partial_{\theta_i}f(\theta) = \frac{\partial}{\partial \theta_i} f(\theta),\quad i=1,\ldots,q,
\]
\[
\partial_\theta f(\theta) = \left(\partial_{\theta_1}f(\theta),\ldots,\partial_{\theta_q}f(\theta)\right)^\top \in \mathbb{R}^q,
\]
\[
\partial^2_\theta f(\theta) = \left(\partial_{\theta_i} \partial_{\theta_j} f(\theta)\right)_{i,j} \in \mathbb{R}^{q \times q}
\]
and 
\[
\partial^3_\theta f(\theta) = \left(\partial_{\theta_i} \partial_{\theta_j} \partial_{\theta_k} f(\theta)\right)_{i,j,k} \in \left(\mathbb{R}^{q}\right)^{\otimes 3}.
\]
\section{Determinantal point processes}\label{setups}
Let $(\Omega, \mathcal{F}, \mathbb{P})$ be a probability space
and $\mathcal{B}_0 (\mathbb{R}^d)$ be class of bounded Borel sets on 
$\mathbb{R}^d$.
Set valued function $X: \Omega \rightarrow \mathcal{N}^d$ is called a
$d$-dimensional point process, where
\[
\mathcal{N}^d := \left\{\mathcal{X} \subset \mathbb{R}^d | 
\# (\mathcal{X} \cap B) < \infty,\ B \in \mathcal{B}_0 (\mathbb{R}^d)\right\}.
\] 
For the kernel function $K: \mathbb{R}^{d}\times \mathbb{R}^d \rightarrow \mathbb{R}$,
the point process $X$ called determinantal point process with kernel $K$ ($X \sim DPP(K)$) if the measure; 
\[
\mathbb{E}\left[\sum_{(x_1,\ldots,x_p) \in X^p}^{\not=} 1_{\{x_1 \in A_1,\ldots, x_p \in A_p\}}\right],\quad p=1,2,\ldots
\]
where $A_j,\ j = 1,\ldots,p$ are bounded Borel sets on $\mathbb{R}^d$ and $X^p$ is $p$-direct product of the point process $X$ and the symbol $\sum_{(x_1,\ldots,x_p) \in X^p}^{\not =}$ means the summation over the mutually disjoints $p$-point $x_1,\ldots,x_p$, has the following density function
\[
\rho^{(p)}(x_1,\ldots,x_p) = \det[K](x_1,\ldots,x_p),
\]
with
\[
[K](x_1,\ldots,x_p):=\left(K(x_i, x_j)\right)_{1 \leq i,j \leq p} \in \mathbb{R}^{p \times p}.
\]
We call the function $\rho^{(1)}(x),\ x \in \mathbb{R}^d$
an intensity function and 
the function
\[
g(x,y):=\frac{\rho^{(2)}(x,y)}{\rho^{(1)}(x) \rho^{(1)}(y)},\quad x, y \in \mathbb{R}^d
\]
a pair correlation function respectively.
As well as \cite{lavancier18}, 
we consider the case when the kernel $K$ satisfies the following form:
\begin{equation}\label{model}
K(x,y) = \sqrt{\rho(x, \lambda) \rho(y, \lambda)} C_{\alpha}(x-y),
\end{equation}
where the function $\rho(\cdot,\lambda)$ is the intensity function with an unknown parameter $\lambda >0$ and $C_\alpha: \mathbb{R}^d \rightarrow \mathbb{R}$ is the function which satisfies $C_\alpha(0)=1$ with an unknown parameter $\alpha \in \mathbb{R}^q$. 
We write $\theta = (\lambda, \alpha)$ and the parameter space 
$\Theta = \Theta_\lambda \times \Theta_\alpha \subset \mathbb{R} \times \mathbb{R}^q$ and denote 
the kernel 
$K = K_\theta$, where $\theta \in \Theta$.
The following assumption ensures the unique existence of the DPP with kernel 
$K_\theta$. See, e.g., \cite{lavancier16} for details.
\begin{assumption}\label{krho}
The parametric model of the determinantal point processes given by 
(\ref{model})
satisfies the following conditions.
\begin{itemize}
\item[(i)]
For every $\theta \in \Theta$, the function $\rho(\cdot, \lambda)$ is bounded and 
$C_\alpha \in L^2(\mathbb{R}^d)$.
\item[(ii)]
For every $\theta \in \Theta$, the function 
$C_\alpha$ has a spectral density bounded by $1/\|\rho(\cdot, \lambda)\|_\infty$.
\end{itemize}
\end{assumption}
In particular, suppose kernel $K_\theta$ of the DPP $X$ 
has the following form, 
\[
K_\theta(x,y) = \lambda C_\alpha(x-y),
\]
which implies that the DPP $X$ is stationary.
In this case, the above condition (ii) is equivalent to that
the Fourier transformation $\mathcal{F} K_\theta$ satisfies the following inequality for every $\theta \in \Theta$
\[
0 \leq \mathcal{F}K_{\theta} \leq 1.
\]

Moreover, we make the following assumptions in order to establish asymptotic theories.
\begin{assumption}\label{regularity}
The following conditions hold true.
\begin{itemize}
\item[(i)]
There exists some $r>0$ such that, for every $(x,y)$ with $|x-y| \leq r$
\[
K_{\theta_1}(x,y) = K_{\theta_2}(x,y) \Rightarrow \theta_1 = \theta_2.
\]
\item[(ii)]
The kernel $K_\theta$ is positive definite.
Moreover, it holds that 
\[
\inf_{(x_1,\ldots,x_p)}\det[K](x_1,\ldots,x_p) >0, p \geq 1.
\]
\item[(iii)]
The kernel $K_\theta$ is fourth continuously differentiable with respect to $\theta$ and all the derivatives are bounded.
\item[(iv)]
The parameter space $\Theta$ is a compact and convex subset of 
$\Theta \times \mathbb{R}^q$.
Moreover, $\Theta$ admits the Sobolev's inequality for imbedding $W^{1,k} \hookrightarrow C(\Theta)\ (k \geq 1)$, i.e., for every $f \in W^{1,k}(\Theta) \subset C(\Theta)$, it holds that 
\[
\sup_{\theta \in \Theta} |f(\theta)| \lesssim \left(\|f\|_{L_k(\Theta)} + \||\partial_\theta f|\|_{L_k(\Theta)}\right),
\]
where 
\[\|f\|_{L_k(\Theta)} := \left(\int_\Theta |f(\theta)|^k d\theta\right)^{\frac{1}{k}}.
\]
\end{itemize}
\end{assumption}
See, e.g., \cite{adams and fournier} for conditions to ensure Sobolev's inequality.

\section{Two step generalized maximum composite likelihood estimator}\label{estimation}
\subsection{Estimation method}
Let $X$ be a $d$-dimensional determinantal point process with kernel 
$K_\theta$ satisfying Assumption  
\ref{krho}, where $\theta \in \Theta$ is an unknown parameter and 
the parameter space $\Theta$ is compact.
Suppose that there exists
the true value 
$\theta_0 = (\lambda_0, \alpha_0)$ in an interior of $\Theta$.
Our goal is to construct asymptotically normal estimators for $\theta_0$.
For the intensity parameter $\lambda$, we consider the following 
normalized quasi-likelihood function which can be seen in, e.g., \cite{clinet and yoshida}:
\[
\mathbb{H}_{n1}(\lambda) = 
\int_{D_n} \log \rho(x, \lambda) N(dx) - 
\int_{D_n} \rho(u, \lambda) du,
\]
where $D_n \subset \mathbb{R}^d$ is the observation window centered $0$ which satisfies the following condition:
\[
|D_n| \asymp n^d,\quad \mu_{d-1}(\partial D_n) \asymp n^{d-1},\quad n \rightarrow \infty
\]
with $|\cdot|$ and $\mu_{d-1}(\cdot)$ is the $d$ and $d-1$-dimensional Lebesgue measure, respectively.
Inspired by \cite{guan2006}, we define the following normalized $p$-th order composite likelihood function
for every integer $p \geq 2$ to estimate the parameter $\alpha$:
\begin{eqnarray}\label{CL}
\mathbb{H}_{n2}^{(p)}(\lambda, \alpha) 
&:=& 
\int_{D_n^p} \left\{
\log [\rho_{\theta}^{(p)}(x_1,\ldots,x_p)] - \log[K_{w,p}(r:\theta)]\right\} \nonumber \\
&& \times w_r(x_1,\ldots,x_p) N^{(p)}(dx_1 \cdots dx_p),
\end{eqnarray} 
where $r > 0$ is a tuning parameter, $\rho_{\theta}^{(p)}$ and $K_{w,p}$ are respectively the joint intensity of $p$-th order of DPP($C_\theta$) and the modified $K$-function of $p$-th order: 
\begin{eqnarray*}
\rho_{\theta}^{(p)}(x_1,\ldots,x_p) 
&=& \det[K_\theta](x_1,\ldots,x_p), \\
&=& \prod_{i=1}^p \rho(x_i, \lambda) \det[C_\theta](x_1,\ldots,x_p)
\end{eqnarray*}
and 
\[
K_{w,p}(r:\theta) =
 \int_{D_n^p} \rho_{\theta}^{(p)} (x_1,\ldots,x_p) w_r(x_1,\ldots,x_p)dx_1\cdots x_p,
\]
where $w_r$ is a bounded weight function whose support is given by 
\[S_r^p := \{(x_1,\ldots,x_p) : |x_1 - x_j| \leq r,\ 1 \leq j \leq p\}.\]
For example, a simple choice of the weight function $w_r$ is given by 
\[
w_r(x_1,\ldots,x_p) = 1_{S_r^p}(x_1,\ldots,x_p).
\]
Note that $N^{(p)}$ is a counting measure of $p$-th order induced by the 
point process $X$, i.e., 
\[
N^{(p)} \left(\prod_{j=1}^p A_j \right) = \sum_{(x_1,\ldots,x_p) \in X^p}^{\not=} 1_{\{x_1 \in A_1,\ldots, x_p \in A_p\}},
\]
where $A_j,\ j = 1,\ldots,p$ are bounded Borel sets on $\mathbb{R}^d$.
Using the estimating functions $\mathbb{H}_{n1}(\lambda)$ and 
$\mathbb{H}_{n2}(\lambda, \alpha)$, we define the following two-step estimator for $\theta = (\lambda, \alpha)$.
\begin{defi}\label{def of estimator}
The estimator $\hat{\theta}_n^{(p)} = (\hat{\lambda}_n, \hat{\alpha}_n^{(p)})$ is called generalized maximum composite likelihood estimator
if 
\begin{eqnarray}\label{GMCLE}
\hat{\lambda}_n 
&:=& \arg \sup_{\lambda \in \Theta_\lambda} \mathbb{H}_{n1}(\lambda), \\
\hat{\alpha}_n^{(p)}
&:=& \arg \sup_{\alpha \in \Theta_\alpha} \mathbb{H}_{n2}^{(p)}(\hat{\lambda}_n, \alpha).
\end{eqnarray}
\end{defi}
We investigate the asymptotic behavior of this estimator for a stationary case.
\subsection{Stationarity of DPPs}
Hereafter, we assume that the kernel function 
$K_\theta(\cdot,\cdot)$ of DPP $X$ 
has the following form:
\begin{equation}\label{stationary kernel}
K_\theta(x,y) = \lambda C_\alpha(x-y),\quad x, y \in \mathbb{R}^d,
\end{equation}
in other words, intensity function is reduced to the constant 
$\lambda$ and $C_\alpha$ is translation invariant.
This condition implies that the DPP $X$ is stationary.
Under Assumption \ref{regularity}, the estimator $\hat{\theta}_n$ is given by the following estimating equation:
\[
U_n^{(p)}(\theta) = 0,
\]
where $U_n^{(p)}(\theta) = (\partial_\lambda \mathbb{H}_{n1}(\lambda), \partial_\alpha \mathbb{H}_{n2}^{(p)\top}(\lambda, \alpha))^\top$, where the symbol $\top$ stands for the transpose.
If the kernel $K_\theta$ satisfies (\ref{stationary kernel}), 
it holds that
\[
\mathbb{H}_{n1}(\lambda) = 
\int_{D_n} \log \lambda N(dx) - 
\int_{D_n} \lambda du.
\]
We write $U_{n1}(\lambda)$ for the derivative of $\mathbb{H}_{n1}(\lambda)$, i.e., 
\[
U_{n1}(\lambda) := \partial_\lambda \mathbb{H}_{n1}(\lambda) =
\int_{D_n} \frac{1}{\lambda} N(dx) - 
\int_{D_n} du.
\]
It is easily seen that the estimator $\hat{\lambda}_n$ is given 
explicitly as follows 
\[
\hat{\lambda}_n = \frac{1}{|D_n|} \sum_{x \in X} 1_{x \in D_n} (x),
\]
which is the maximum likelihood estimator for intensity parameter 
and it is well known that it satisfies the consistency 
and the asymptotic normality. See, e.g., \cite{brillinger mixing}. 
Moreover, the estimating function 
$\mathbb{H}_{n2}(\lambda, \alpha)$ is given as follows 
when the DPP is stationary:
\begin{eqnarray*}
\mathbb{H}_{n2}^{(p)}(\lambda, \alpha) 
&:=& 
\int_{D_n^p} \left\{
\log [\rho_{\theta}^{(p)}(x_1,\ldots,x_p)] - \log[K_{w,p}(r:\theta)]\right\}\\
&& \times w_r(x_1,\ldots,x_p) N^{(p)}(dx_1 \cdots dx_p) \\
&=& 
\int_{D_n^p} \left\{
\log [\lambda^p \tilde{\rho}_{\alpha}^{(p)}(x_1,\ldots,x_p)] - \log[\lambda^p\tilde{K}_{w,p}(r:\alpha)]\right\}\\
&& \times w_r(x_1,\ldots,x_p) N^{(p)}(dx_1 \cdots dx_p) \\
&=& 
\int_{D_n^p} \left\{
\log [\tilde{\rho}_{\alpha}^{(p)}(x_1,\ldots,x_p)] - \log[\tilde{K}_{w,p}(r:\alpha)]\right\}\\
&& \times w_r(x_1,\ldots,x_p) N^{(p)}(dx_1 \cdots dx_p),
\end{eqnarray*}
where 
\[
\tilde{\rho}_{\alpha}^{(p)}(x_1,\ldots x_p) = \det[C_\alpha](x_1,\ldots,x_p),\quad x_1,\ldots,x_p \in \mathbb{R}^d,
\]
\[
[C_\alpha(x_1,\ldots,x_p)] = \left(C_\alpha (x_i - x_j)\right)_{1 \leq i, j \leq p}
\]
and 
\[
\tilde{K}_{w,p}(r:\alpha) = \int_{D_n^p} \tilde{\rho}_{\alpha}^{(p)} (x_1,\ldots,x_p) w_r(x_1,\ldots,x_p)dx_1\cdots x_p.
\]
Since we can see that the estimating function 
$\mathbb{H}_{n2}$ does not depend on $\lambda$, we denote this function and the score function
\[
CL_n^{(p)}(\alpha)=
\mathbb{H}_{n2}(\lambda, \alpha); \quad
U_{n2}^{(p)}(\alpha)=
\partial_\alpha \mathbb{H}_{n2}(\lambda, \alpha).
\]

Since we assume that $\tilde{\rho}_{\alpha}^{(p)}$ is a continuously differentiable function with respect to $\alpha$ by Assumption \ref{regularity}, we have that
\begin{eqnarray}\label{Score}
U_{n2}^{(p)}(\alpha)
&:=& 
\int_{D_n^p} \left[
\frac{\partial_\alpha \tilde{\rho}_{\alpha}^{(p)}}{\tilde{\rho}_{\alpha}^{(p)}}(x_1,\ldots,x_p) - \frac{\partial_\alpha \tilde{K}_{w,p} (r:\alpha)}{\tilde{K}_{w,p}(r:\alpha)}
\right] \nonumber \\
&& \times w_r(x_1,\ldots,x_p) N^{(p)}(dx_1\cdots dx_p),
\end{eqnarray}
where $\partial_\alpha \tilde{\rho}_{\alpha}^{(p)}$ and $\partial_\alpha \tilde{K}_{w,p}$ is the first derivative with respect to 
$\alpha$ for each function.
Note that under Assumptions \ref{krho} and \ref{regularity}, $\hat{\theta}_n$ solves the following estimating equation:
\begin{equation}\label{GMCLE score}
U_{n2}^{(p)}(\hat{\alpha}_n^{(p)}) = 0.
\end{equation}
We shall prove the asymptotic behavior of this estimator.
The next theorem states that GMCLE satisfies the consistency.
\begin{thm}\label{consistency}
Under Assumptions \ref{krho} and \ref{regularity}, the estimator $\hat{\theta}_n$ is consistent to $\theta_0$, i.e., 
\[
\hat{\theta}_n^{(p)} \stackrel{\mathbb{P}}{\longrightarrow} \theta_0,\quad n \rightarrow \infty.
\]
\end{thm}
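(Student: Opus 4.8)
The plan is to exploit the decoupling that stationarity forces between the two steps. As noted just above, when the kernel has the form (\ref{stationary kernel}) the composite likelihood $CL_n^{(p)}(\alpha)=\mathbb{H}_{n2}^{(p)}(\lambda,\alpha)$ carries no dependence on $\lambda$; consequently $\hat\alpha_n^{(p)}=\arg\sup_{\alpha\in\Theta_\alpha}CL_n^{(p)}(\alpha)$ does not involve the first-step estimator $\hat\lambda_n$ at all, and it suffices to prove $\hat\lambda_n\toP\lambda_0$ and $\hat\alpha_n^{(p)}\toP\alpha_0$ separately and combine them. The intensity part is the easy one: $\hat\lambda_n=|D_n|^{-1}N(D_n)$ is an empirical mean, the stationary DPP has constant intensity $\rho^{(1)}\equiv\lambda_0$ (since $\rho^{(1)}(x)=K_{\theta_0}(x,x)=\lambda_0 C_{\alpha_0}(0)=\lambda_0$), so $\E[\hat\lambda_n]=\lambda_0$, and the Brillinger mixing of stationary DPPs (\cite{soshnikov2000}, \cite{brillinger mixing}) gives $\V(\hat\lambda_n)=O(|D_n|^{-1})$; Chebyshev's inequality then yields consistency.

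The substance is the M-estimator $\hat\alpha_n^{(p)}$, for which I would run the classical Wald-type argument: identify a deterministic limit criterion, show it is uniquely maximised at $\alpha_0$, and establish a uniform law of large numbers. Rather than normalising $CL_n^{(p)}$ directly, I would work with the centred object $\mathbb{D}_n(\alpha):=|D_n|^{-1}\{CL_n^{(p)}(\alpha)-CL_n^{(p)}(\alpha_0)\}$, which has the same maximiser. This is deliberate: the term $\log\tilde K_{w,p}(r:\alpha)$ diverges like $\log|D_n|$ (because $\tilde K_{w,p}\asymp|D_n|$ once $w_r$ confines the configuration to a bounded neighbourhood of $x_1$), but the divergent part is free of $\alpha$ and cancels in the difference. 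Taking expectations through the $p$-th order Campbell relation, $\E[\int_{D_n^p}(\cdot)\,N^{(p)}]=\int_{D_n^p}(\cdot)\,\rho^{(p)}_{\theta_0}$, and passing to increment coordinates so that stationarity makes the integrand translation invariant, one checks that $\E[\mathbb{D}_n(\alpha)]\to\mathbb{D}(\alpha)$ for a finite limit. Writing $q_\alpha:=\tilde\rho_\alpha^{(p)}w_r/\tilde K_{w,p}(r:\alpha)$ for the probability density these expressions induce on $S_r^p$, the bookkeeping collapses (the $\log\tilde\kappa(\alpha)$ contributions, with $\tilde\kappa(\alpha)=\lim|D_n|^{-1}\tilde K_{w,p}(r:\alpha)$, cancel against the normalisation) to $\mathbb{D}(\alpha)=-c_0\,\mathrm{KL}(q_{\alpha_0}\,\|\,q_\alpha)$ for a positive constant $c_0$. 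By the information inequality $\mathbb{D}(\alpha)\le0$ with equality iff $q_\alpha=q_{\alpha_0}$, that is, iff $\det[C_\alpha]$ and $\det[C_{\alpha_0}]$ are proportional on $S_r^p$. Here Assumption \ref{regularity}(ii) keeps all logarithms finite (the determinants are bounded away from $0$), and a short argument on the determinant structure---using $C_\alpha(0)=1$ and continuity to force the proportionality constant to one, hence $C_\alpha=C_{\alpha_0}$ on $\{|x-y|\le r\}$---combined with the identifiability Assumption \ref{regularity}(i) upgrades this to $\alpha=\alpha_0$. Thus $\mathbb{D}$ is uniquely maximised at $\alpha_0$.

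The main obstacle is the uniform law of large numbers $\sup_{\alpha\in\Theta_\alpha}|\mathbb{D}_n(\alpha)-\mathbb{D}(\alpha)|\toP0$. I would obtain it through the Sobolev route built into Assumption \ref{regularity}(iv): bound the random field $\alpha\mapsto\mathbb{D}_n(\alpha)-\E[\mathbb{D}_n(\alpha)]$ together with its $\alpha$-gradient in $L_k(\Theta_\alpha)$, then apply the embedding $W^{1,k}(\Theta_\alpha)\hookrightarrow C(\Theta_\alpha)$ to convert these into a bound on $\E\sup_\alpha|\mathbb{D}_n-\E\mathbb{D}_n|$. The two ingredients feeding the $L_k$ estimates are the boundedness of $K_\theta$ and its derivatives (Assumption \ref{regularity}(iii)), hence of $C_\alpha$, $\partial_\alpha C_\alpha$ and the induced integrands $\log\tilde\rho_\alpha^{(p)}$, $\partial_\alpha\log\tilde\rho_\alpha^{(p)}$ and their $\tilde K_{w,p}$-analogues, and the Brillinger mixing bounds on the cumulant measures of the $N^{(p)}$-integrals, which give $\E|\mathbb{D}_n(\alpha)-\E\mathbb{D}_n(\alpha)|^k=O(|D_n|^{-k/2})$ pointwise (and likewise for the gradient); integrating over the compact $\Theta_\alpha$ and taking $k$-th roots then yields $\E\sup_\alpha|\mathbb{D}_n-\E\mathbb{D}_n|\to0$. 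The nonrandom gap $\sup_\alpha|\E\mathbb{D}_n-\mathbb{D}|\to0$ follows from dominated convergence and the equicontinuity again supplied by (iii). With the uniform convergence and the unique maximisation of $\mathbb{D}$ in hand, the standard argmax argument for M-estimators gives $\hat\alpha_n^{(p)}\toP\alpha_0$, and together with $\hat\lambda_n\toP\lambda_0$ this is the assertion $\hat\theta_n^{(p)}\toP\theta_0$. The delicate points I expect to fight are the mixing-based moment bounds and making the determinant-to-kernel identification fully rigorous for general $p$.
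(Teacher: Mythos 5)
Your proposal reaches the statement by a genuinely different route from the paper. The paper's proof is short and delegative: it dismisses $\hat\lambda_n$ by citing \cite{cic dpp}, invokes ergodicity of $X$ from \cite{soshnikov2000}, and then verifies the hypotheses of Theorem 1 of \cite{guan2006} --- the identifiability claim that $\E_{\theta_0}[U_{n2}^{(p)}(\alpha)]=0$ if and only if $\alpha=\alpha_0$, plus boundedness and continuity of the integrand of the score in $\alpha$ and in the configuration (using that $\det[C_\alpha]$ is bounded away from zero and $\Theta$ is compact) --- and concludes by citing that result as a black box. You instead build the full Wald-type M-estimator argument: Chebyshev plus a variance bound for $\hat\lambda_n$; a centred criterion $\mathbb{D}_n(\alpha)$ whose expectation you identify, via the Campbell formula and stationarity, as a positive constant times a negative Kullback--Leibler divergence, so the limit criterion is maximised at $\alpha_0$; and a uniform law of large numbers obtained from the Sobolev embedding of Assumption \ref{regularity}(iv) combined with Brillinger-mixing moment bounds. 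That last ingredient is precisely the machinery the paper reserves for the moment-convergence proof (conditions (M1)--(M4) of Theorem \ref{pldi DPP}), so your argument front-loads it; what this buys is self-containedness (no reliance on Guan's theorem, whose second-order setting must in any case be stretched to arbitrary order $p$), at the cost of length. Your explicit centring at $\alpha_0$, which cancels the $\alpha$-free $\log|D_n|$ divergence coming from $\log\tilde K_{w,p}(r:\alpha)$, is also a real technical improvement over working with $|D_n|^{-1}CL_n^{(p)}(\alpha)$ itself.

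One step you should repair: the identification of the maximiser. Equality in the information inequality gives $\det[C_\alpha]=c\,\det[C_{\alpha_0}]$ on $S_r^p$ with $c=\tilde K_{w,p}(r:\alpha)/\tilde K_{w,p}(r:\alpha_0)$, and your suggestion to force $c=1$ ``by continuity and $C_\alpha(0)=1$'' does not work as stated: at coincident points both determinants vanish (the matrices have repeated rows), so collapsing the configuration yields $0=0$ and no information about $c$. One needs a genuine argument --- for instance, for $p=2$, matching Taylor expansions in $1-C_\alpha(u)^2=c\,\bigl(1-C_{\alpha_0}(u)^2\bigr)$ near $u=0$ --- or an identifiability assumption strong enough to cover proportionality of correlation determinants; Assumption \ref{regularity}(i) as written concerns equality of kernels, not proportionality of determinants. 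To be fair, this is exactly the point the paper's own proof also glosses over: it asserts the score identifiability without proof, which is the first-order form of the same claim. So your proposal is no less rigorous than the published argument, but this is the one place where both proofs need the identifiability hypothesis to be read more strongly than it is stated.
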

\begin{proof}
Since the consistency of $\hat{\lambda}_n$ is seen in, e.g., 
Biscio and Lavancier (2017), we shall prove the consistency of 
$\hat{\alpha}_n^{(p)}$ only.

It is well-known that$X$ is ergodic under Assumptions \ref{krho} and \ref{regularity}, (See e.g. Soshnikov (2000)).
Noting that 
\begin{eqnarray*}
\mathbb{E}_{\theta_0} \left[\frac{1}{|D_n|}U_{n2}^{(p)}(\alpha)\right]
&=& \frac{1}{|D_n|} \int_{D_n^p} \left[
\frac{\partial_{\alpha} \tilde{\rho}_{\alpha}^{(p)}}{\tilde{\rho}_{\alpha}^{(p)}}(x_1,\ldots,x_p) - \frac{\partial_{\alpha} \tilde{K}_{w,p} (r:\alpha)}{\tilde{K}_{w,p}(r:\alpha)}
\right] \\
&& \times w_r(x_1,\ldots,x_p) \rho_{\theta_0}^{(p)}(x_1,\ldots,x_p) dx_1\cdots dx_p,
\end{eqnarray*}
we have that $\mathbb{E}_{\theta_0} [U_{n,p}(\alpha_0)] = 0$ if and only if $\alpha = \alpha_0$.
Moreover, it follows from a direct calculation that
\[
\partial_\alpha \tilde{\rho}_{\alpha}^{(p)} (x_1,\ldots,x_p) = \tr [C_\alpha^{-1} \partial_\alpha C_{\alpha}](x_1,\ldots,x_p)
\]
and that 
\[
\partial_\alpha \tilde{K}_{w,p} (r;\alpha) = \int_{D_n^p} \tr [C_\alpha^{-1} \partial_\alpha C_{\alpha}](x_1,\ldots,x_p) w_r(x_1,\ldots,x_p) dx_1\cdots dx_p.
\]
Since under Assumption \ref{regularity}, $\det[C_\alpha](x_1,\ldots,x_p)$ is bounded by zero from below uniformly in 
$x_1,\ldots,x_p \in \mathbb{R}^d$, 
the parameter space $\Theta$ is assumed to be compact, and then the functions 
$\partial_\alpha \rho_{\alpha}^{(p)}$ and $\partial_\alpha K_{w,p}$ are continuous with respect to $\alpha$ and $(x_1,\ldots,x_p)$,
respectively,
it holds that the integrand in the score function $U_{n2}^{(p)}(\alpha)$ is bounded and continuous in $\alpha$ and $(x_1,\ldots,x_p)$.
We therefore obtain the conclusion as a consequence from Theorem 1 of \cite{guan2006}.
\qed
\end{proof}
Moreover, we can derive the asymptotic variance of the score function as follows.
The similar calculation is appeared in \cite{brillinger mixing} and 
\cite{lavancier18}.
\begin{prop}\label{var}
Under Assumptions \ref{krho} and \ref{regularity}, 
there exists the limit 
\[
\lim_{n \rightarrow \infty}
\Var_{\theta_0} \left[\frac{1}{\sqrt{|D_n|}} U_{n}^{(p)}(\theta_0)\right]=:\Sigma^{(p)}(\theta_0), 
\]
which is a $(q+1) \times (q+1)$ matrix.
\end{prop}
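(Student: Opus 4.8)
The plan is to compute the limiting matrix blockwise, exploiting that every component of $U_n^{(p)}(\theta_0)$ is a linear functional of a factorial counting measure whose moments are explicit for a DPP. Writing $U_n^{(p)}(\theta_0)=\big(U_{n1}(\lambda_0),\,U_{n2}^{(p)}(\alpha_0)^\top\big)^\top$, the matrix $\Var_{\theta_0}[|D_n|^{-1/2}U_n^{(p)}(\theta_0)]$ splits into the scalar variance of $|D_n|^{-1/2}U_{n1}(\lambda_0)$, the $q\times q$ variance of $|D_n|^{-1/2}U_{n2}^{(p)}(\alpha_0)$, and the $1\times q$ cross-covariance. For each block I would use the factorial-moment expansion: for bounded functions $h_1,h_2$ of orders $p_1,p_2$,
\[
\E_{\theta_0}\!\Big[\int h_1\,dN^{(p_1)}\!\int h_2\,dN^{(p_2)}\Big]
=\sum_{m=0}^{p_1\wedge p_2}\ \sum_{\text{mergings of }m\text{ points}}\ \int (h_1\otimes h_2)\,\rho_{\theta_0}^{(p_1+p_2-m)},
\]
where the inner sum runs over the ways of identifying $m$ of the $p_1$ coordinates with $m$ of the $p_2$ coordinates. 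Subtracting $\E_{\theta_0}[\int h_1\,dN^{(p_1)}]\,\E_{\theta_0}[\int h_2\,dN^{(p_2)}]$ combines with the $m=0$ term to produce the connected correlation $\rho^{(p_1+p_2)}_{\theta_0}-\rho^{(p_1)}_{\theta_0}\otimes\rho^{(p_2)}_{\theta_0}$, while the coincidence terms ($m\geq1$) survive unchanged. Because $X\sim\mathrm{DPP}(K_{\theta_0})$, each $\rho_{\theta_0}^{(k)}$ is the determinant $\det[K_{\theta_0}]$ of order $k$, so all terms are explicit, and boundedness of $h_1,h_2$ comes from Assumption \ref{regularity}(ii)--(iii) (the determinant $\det[C_{\alpha_0}]$ bounded away from zero makes $\partial_\alpha\tilde\rho^{(p)}_{\alpha_0}/\tilde\rho^{(p)}_{\alpha_0}$ bounded).

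For the intensity block, $U_{n1}(\lambda_0)=\lambda_0^{-1}N(D_n)-|D_n|$ is a first-order functional, so only $\rho^{(1)}_{\theta_0}=\lambda_0$ and $\rho^{(2)}_{\theta_0}(x,y)=\lambda_0^2\big(1-C_{\alpha_0}(x-y)^2\big)$ enter. A direct computation gives
\[
\frac{1}{|D_n|}\Var_{\theta_0}[U_{n1}(\lambda_0)]
=\frac{1}{\lambda_0}-\frac{1}{|D_n|}\int_{D_n^2}C_{\alpha_0}(x-y)^2\,dx\,dy,
\]
and since $C_{\alpha_0}\in L^2(\R^d)$ by Assumption \ref{krho}(i), translation invariance together with the window condition $\mu_{d-1}(\partial D_n)\asymp n^{d-1}=o(|D_n|)$ makes the second term converge to $\int_{\R^d}C_{\alpha_0}(u)^2\,du$. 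This already isolates the mechanism used throughout: stationarity turns each normalized integral into an average over one free translation, and the $L^2$ decay of $C_{\alpha_0}$ guarantees integrability in the difference variables.

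For the $\alpha$-block I would apply the expansion above with $p_1=p_2=p$ and $h_i=\psi_{\alpha_0}^{(i)}\,w_r$, where $\psi_{\alpha_0}=\partial_\alpha\tilde\rho^{(p)}_{\alpha_0}/\tilde\rho^{(p)}_{\alpha_0}-\partial_\alpha\tilde K_{w,p}/\tilde K_{w,p}$ is bounded and $w_r$ confines each cluster to the diagonal strip $S_r^p$. Every coincidence term ($1\leq m\leq p$) has exactly one free translation after using the translation invariance of $\det[K_{\theta_0}]$; fixing one coordinate and integrating the remaining $2p-m-1$ differences over the compact region cut out by the two copies of $w_r$ yields, after division by $|D_n|$ and up to the vanishing edge correction, a finite explicit constant. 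Collecting these, together with the analogous cross-covariance block (orders $p_1=1$, $p_2=p$, involving joint intensities up to order $p+1$), then assembles $\Sigma^{(p)}(\theta_0)$.

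The main obstacle is the $m=0$ disjoint term. Before cancellation it carries two free translations and is of order $|D_n|^2$; the subtraction of the product of means replaces the integrand by the connected correlation $\rho^{(2p)}_{\theta_0}-\rho^{(p)}_{\theta_0}\otimes\rho^{(p)}_{\theta_0}$. For a DPP this connected part is precisely the sum of those terms in the $2p\times2p$ determinant expansion that couple the two index groups, each carrying at least one off-diagonal factor $K_{\theta_0}(x_i-y_j)=\lambda_0C_{\alpha_0}(x_i-y_j)$ linking the clusters. Since $C_{\alpha_0}\in L^2$, these coupling factors are integrable in the inter-cluster displacement, so the normalized disjoint term converges; this is exactly the decay-of-correlations (Brillinger mixing) property of stationary DPPs established by Soshnikov (2000) and used by Biscio and Lavancier. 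Verifying that this connected integral, combined with the compact support supplied by $w_r$, is absolutely integrable uniformly in $n$ — so that dominated convergence applies and the limit is finite — is the crux; the remaining blocks follow from the same estimates, and gathering the block limits produces the claimed $(q+1)\times(q+1)$ matrix $\Sigma^{(p)}(\theta_0)$.
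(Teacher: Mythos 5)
Your proposal is correct, and its skeleton is the same as the paper's: a blockwise computation of $\Sigma^{(p)}(\theta_0)$, the factorial-moment expansion of $\E_{\theta_0}\bigl[\int h_1\,dN^{(p)}\int h_2\,dN^{(p)}\bigr]$ indexed by the number of coincident points, translation invariance to remove one free coordinate, and negligibility of the boundary layer $D_n\setminus D_n^{\ominus r}$. Where you genuinely depart from the paper is the disjoint-tuples term ($m=0$ in your notation, $l=2p$ in the paper's). The paper treats all $l$ with $p\le l\le 2p$ uniformly, asserting that after fixing one point the product weight $W_r=w_r\otimes w_r$ is supported in $S_{r,0}^l=\{|u_j|\le r,\ j=2,\dots,l\}$; that is true only when the two $p$-tuples share at least one point, since a common point is what ties the two clusters together. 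For $l=2p$ the inter-cluster displacement is unconstrained by $W_r$, so the paper's compact-support argument does not apply to that term. You single it out and close it by the right mechanism: since $\E_{\theta_0}[U^{(p)}_{n2}(\alpha_0)]=0$, the disconnected product $\rho^{(p)}_{\theta_0}\otimes\rho^{(p)}_{\theta_0}$ contributes nothing, and the connected remainder of $\det[K_{\theta_0}]$ of order $2p$ consists of permutations coupling the two blocks, whose decay in the inter-cluster displacement (Brillinger mixing, Soshnikov/Biscio--Lavancier) gives an integrable limit. One small repair to your argument: such a coupling permutation carries at least \emph{two} crossing factors $\lambda_0 C_{\alpha_0}(x_i-y_j)$ (a bijection that sends some $x$-index into the $y$-block must also send some $y$-index into the $x$-block), not merely one as you write; this matters because $C_{\alpha_0}\in L^2(\R^d)$ does not make a single factor integrable, whereas the product of two shifted copies of $C_{\alpha_0}$ is integrable in the displacement variable by Cauchy--Schwarz, all remaining factors being bounded by Assumption \ref{regularity}. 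With that correction your treatment of the crux term is complete --- and in fact more careful than the paper's own proof at exactly this point.
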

\begin{proof}
To show the existence of the matirx $\Sigma^{(p)}(\theta_0)$, we set
\begin{eqnarray}\label{variance matrix}
\Sigma^{(p)}(\theta_0) = \left(\begin{array}{cc}
\Sigma_{11}(\theta_0) & \Sigma_{12}^{(p)}(\theta_0) \\
\Sigma_{21}^{(p)}(\theta_0) & \Sigma_{22}^{(p)}(\theta_0) \\
\end{array}
\right),
\end{eqnarray}
where
\[
\Sigma_{11}(\theta_0) = 
\lim_{n \rightarrow \infty} \frac{1}{|D_n|} \mathbb{E}_{\theta_0}[\{U_{n1}(\lambda_0)\}^2],
\]
\[
\Sigma_{12}^{(p)}(\theta_0) = \Sigma_{21}^{(p)\top}(\theta_0) = 
\lim_{n \rightarrow \infty} \frac{1}{|D_n|} 
\mathbb{E}_{\theta_0} [U_{n1}(\lambda_0) U_{n2}^{(p)\top}(\alpha_0)]
\]
and 
\[
\Sigma_{22}^{(p)}(\theta_0) =
\lim_{n \rightarrow \infty} 
\frac{1}{|D_n|} \mathbb{E}_{\theta_0} [U_{n2}^{(p)}(\alpha_0) U_{n2}^{(p)\top}(\alpha_0)].
\]
The component $\Sigma_{11}(\theta_0)$ is written explicitly as
\[
\Sigma_{11}(\theta_0) = \frac{1}{\lambda_0} - \int_{\mathbb{R}^d} C_{\alpha_0}^2(u) du.
\]
As for the other components, we shall only show the existence of $\Sigma_{22}^{(p)}(\theta_0)$ since the others are similarly proved. 

Note that, for every $i,j \in \{1,2,\ldots,q\}$, 
\begin{eqnarray*}
\lefteqn{
\frac{1}{|D_n|} \mathbb{E}_{\theta_0} [U_{n2}^{(p) i}(\alpha_0)U_{n2}^{(p) j}(\alpha_0)]}\\
&=&
\frac{1}{|D_n|} \mathbb{E}_{\theta_0}\left[
\sum_{(x_1,\ldots,x_p) \in X^p}^{\not =} U_2^{(p)i}(x_1,\ldots,x_p;\alpha_0)
\sum_{(y_1,\ldots,y_p) \in X^p}^{\not =} U_2^{(p)j}(y_1,\ldots,y_p;\alpha_0), 
\right]
\end{eqnarray*}
where 
\begin{eqnarray*}
U_2^{(p)i}(x_1,\ldots,x_p;\alpha_0) 
&=&\left(\frac{\partial_{\alpha_i} \tilde{\rho}_{\alpha_0}^{(p)}}{\tilde{\rho}_{\alpha_0}^{(p)}}(x_1,\ldots,x_p) - \frac{\partial_{\alpha_i} \tilde{K}_{w,p} (r:\alpha_0)}{\tilde{K}_{w,p}(r:\alpha_0)}\right)\\
&\times& w_r(x_1,\ldots,x_p).
\end{eqnarray*}
For every $p \leq l \leq 2p$, assume that 
$x_1,\ldots x_p$ and $y_1,\ldots y_p$ have $2p-l$ common variables.
Denote the $l$-different variables by $x_{(1)},\ldots,x_{(l)}$.
Since $C_\alpha(0) = 1$, the expectation is essentially a function of 
$x_{(1)},\ldots,x_{(l)}$, which can be written by the finite sum of the following integrals
for each $l = p,\ldots,2p$ and all combination of such different variables :
\begin{eqnarray*}
&& \frac{1}{|D_n|} \int_{D_n^l}G_{\alpha_0}^{i,j} (x_{(1)},\ldots,x_{(l)}) W_r(x_{(1)},\ldots,x_{(l)})\\
&&
\ \ \ \ \ \ \ \ \ \ \ \ \times \rho_{\theta_0}^{(l)}(x_{(1)},\ldots,x_{(l)}) dx_{(1)} \cdots dx_{(l)},
\end{eqnarray*}
where 
\begin{eqnarray*}
\lefteqn{
G_{\alpha_0}^{i,j}(x_{(1)},\ldots,x_{(l)})}\\
&=&\left\{
\frac{\partial_{\alpha_i}\tilde{\rho}_{\alpha}^{(p)}}{\tilde{\rho}_{\alpha}^{(p)}}(x_1,\ldots,x_p) - \frac{\partial_{\alpha_i}\tilde{K}_{w,p} (r:\alpha)}{\tilde{K}_{w,p}(r:\alpha)}
\right\}\\
&&\times \left\{\frac{\partial_{\alpha_j}\tilde{\rho}_{\alpha}^{(p)}}{\tilde{\rho}_{\alpha}^{(p)}}(y_1,\ldots,y_p) - \frac{\partial_{\alpha_j}\tilde{K}_{w,p} (r:\alpha)}{\tilde{K}_{w,p}(r:\alpha)}\right\}
\end{eqnarray*}
and 
\begin{eqnarray*}
W_r(x_{(1)},\ldots,x_{(l)})
= w_r(x_1,\ldots,x_p) \times w_r(y_1,\ldots,y_p),
\end{eqnarray*}
under the assumption that $x_1,\ldots,x_p$ and $y_1,\ldots,y_p$ have $l$-different variables.
We change the variables by $u_1 = x_{(1)}$ and $u_k = x_{(k)}-x_{(1)},\ k = 2,\ldots,l$. 
Since we have that
\[\tilde{\rho}_{\alpha_0}^{(l)} (x_{(1)},\ldots,x_{(l)})
= \det \left[C_{\alpha_0}\right](x_{(1)},\ldots,x_{(l)})
\]
and
\[
\partial_\alpha \tilde{\rho}_{\alpha_0}^{(p)} = \tr [C_{\alpha_0}^{-1} \partial_\alpha C_{\alpha_0}] (x_{(1)},\ldots,x_{(l)}),
\]
we can easily see that there exist the functions $\bar{\rho}_{\alpha_0,l}(u_1,u_2,\ldots,u_l)$ and 
$H_{\alpha_0}^{i,j}(u_1,u_2,\ldots,u_l)$ such that 
\begin{eqnarray*}
H_{\alpha_0}^{i,j}(0,u_2,\ldots,u_l) = G_{\alpha_0}^{i,j}(x_{(1)},\ldots,x_{(l)})
\end{eqnarray*}
and 
\begin{eqnarray*}
\bar{\rho}_{\theta_0}^{(l)}(0,u_2,\ldots,u_l) = {\rho}_{\theta_0}^{(l)}(x_{(1)},\ldots,x_{(l)}).
\end{eqnarray*}
Consider the following sets: 
\begin{eqnarray*}
D_n^{\ominus r} 
&:=& \{x \in D_n | |x-y| > r,\ y \in \partial D_n\}, \\
D_n-u_1 
&:=& \left\{
x \in \mathbb{R}^d | x= v - u_1,v \in D_n\right\}, \\
S_{r,0}^{l} 
&:=& \{(0, u_2,\ldots, u_l) | |u_j| \leq r, j=2,\ldots,l\},
\end{eqnarray*}
where $S_{r,0}^l$ is the support set of the function $W_r(0,u_2,\ldots,u_l)$.
Noticing that, if $u_1 \in D_n^{\ominus r}$, it holds that 
$S_{r,0}^l \subset (D_n- u_1)^{l-1}$, we have
\begin{eqnarray*}
\lefteqn{
\frac{1}{|D_n|} \mathbb{E}_{\theta_0} [U_{n,p}^i(\alpha_0)U_{n,p}^j(\alpha_0)]}\\
&=& \frac{1}{|D_n|} \int_{D_n} \int_{D_n-u_1}\cdots \int_{D_n-u_1}
H_{\alpha_0}^{i,j}(0,u_2,\ldots,u_l)\\
&&\ \times W_r(0,u_2,\ldots,u_l) \bar{\rho}_{\theta_0}^{(l)}(0,u_2,\ldots,u_l)
du_1\cdots du_l \\
&=& (I) + (II), 
\end{eqnarray*}
where 
\begin{eqnarray*}
(I) 
&=&  \frac{1}{|D_n|} \int_{D_n^{\ominus r}} \int_{S_{r,0}^l}
H_{\alpha_0}^{i,j}(0,u_2,\ldots,u_l)\\
&&\ \times W_r(0,u_2,\ldots,u_l) \bar{\rho}_{\theta_0}^{(l)}(0,u_2,\ldots,u_l)
du_1\cdots du_l 
\end{eqnarray*}
and 
\begin{eqnarray*}
(II)
&=& \frac{1}{|D_n|} \int_{D_n\setminus D_n^{\ominus}} \int_{(D_n-u_1)^{l-1}}
H_{\alpha_0}^{i,j}(0,u_2,\ldots,u_l)\\
&&\ \times W_r(0,u_2,\ldots,u_l) \bar{\rho}_{\theta_0}^{(l)}(0,u_2,\ldots,u_l)
du_1\cdots du_l.
\end{eqnarray*}
We can evaluate $|(II)|$ as follows:
\begin{eqnarray*}
|(II)|
&\leq& \frac{1}{|D_n|} \int_{D_n\setminus D_n^{\ominus}} \int_{(\mathbb{R}^d)^{l-1}} \left|
H_{\alpha_0}^{i,j}(0,u_2,\ldots,u_l) \right. \\
&&\left. \times W_r(0,u_2,\ldots,u_l) \bar{\rho}_{\theta_0}^{(l)}(0,u_2,\ldots,u_l) \right|
du_1\cdots du_l \\
&\leq&
\frac{|D_n \setminus D_n^{\ominus r}|}{|D_n|} \|H_{\alpha_0}^{ij}\|_\infty \int_{(\mathbb{R}^d)^{l-1}} W_r(0,u_2,\ldots,u_l) \\
&&\times|\bar{\rho}_{\theta_0}^{(l)}(0,u_2,\ldots,u_l)|
du_2\cdots du_l.
\end{eqnarray*}
The right-hand side converges to $0$ as $n \rightarrow \infty$ since 
it follows from Assumption \ref{regularity} that
$|D_n \setminus D_n^{\ominus r}|/|D_n| \rightarrow 0$.
Moreover, noticing that
$|D_n^{\ominus r}/|D_n| \rightarrow 1$ as $n \rightarrow \infty$, we obtain
\begin{eqnarray*}
(I)
&=& \frac{|D_n^{\ominus r}|}{|D_n|}  \int_{S_{r,0}^l}
H_{\alpha_0}^{i,j}(0,u_2,\ldots,u_l)\\
&&\ \times W_r(0,u_2,\ldots,u_l) \bar{\rho}_{\theta_0}^{(l)}(0,u_2,\ldots,u_l)
du_1\cdots du_l  \\
&\rightarrow& \int_{S_{r,0}^l}
H_{\alpha_0}^{i,j}(0,u_2,\ldots,u_l)\\
&&\ \times W_r(0,u_2,\ldots,u_l) \bar{\rho}_{\theta_0}^{(l)}(0,u_2,\ldots,u_l)
du_1\cdots du_l \\
&=& \int_{\mathbb{R}^{l-1}}
H_{\alpha_0}^{i,j}(0,u_2,\ldots,u_l)\\
&&\ \times W_r(0,u_2,\ldots,u_l) \bar{\rho}_{\theta_0}^{(l)}(0,u_2,\ldots,u_l)
du_1\cdots du_l,\quad n \rightarrow \infty,
\end{eqnarray*}
which implies the conclusion.
\qed
\end{proof}
The next lemma states the convergence of the Hessian matrix of composite likelihood function.
\begin{lem}\label{fisher conv}
It holds that
\begin{equation}\label{unif conv}
-\frac{1}{|D_n|}\partial_\theta U_{n}^{(p)}(\hat{\theta}_n) \rightarrow^p \mathcal{I}^{(p)}(\theta_0)
\end{equation}
as $n \rightarrow \infty$, where $\mathcal{I}^{(p)}(\theta_0)$ is a $(q+1) \times (q+1)$ matrix 
of the following form.
\[
\mathcal{I}^{(p)}(\theta_0) = \left(\begin{array}{cc}
\frac{1}{\lambda_0} & 0 \\
0 & \mathcal{I}_{22}^{(p)}(\theta_0) \\
\end{array}
\right),
\]
and
\[
\mathcal{I}_{22}^{(p)}(\theta_0) = -\int_{(\mathbb{R}^d)^{p-1}} \partial_\alpha U_p(0,u_2\ldots,u_p;\alpha_0) \rho_{\theta_0,p}(0,u_2,\ldots,u_p) du_2\cdots du_p.
\]
\end{lem}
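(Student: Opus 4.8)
The plan is to exploit the stationary two-step structure to reduce $\partial_\theta U_n^{(p)}$ to a block-diagonal matrix, to handle the scalar intensity block by an explicit computation, and to handle the $q\times q$ composite-likelihood block by a uniform law of large numbers combined with the consistency of $\hat\alpha_n^{(p)}$. First I would record that, under the stationary kernel (\ref{stationary kernel}), $\mathbb{H}_{n1}$ depends only on $\lambda$ while $\mathbb{H}_{n2}^{(p)}$ depends only on $\alpha$, so that $\partial_\alpha U_{n1}\equiv 0$ and $\partial_\lambda U_{n2}^{(p)}\equiv 0$; hence $\partial_\theta U_n^{(p)}$ is block diagonal, which already produces the off-diagonal zeros in $\mathcal{I}^{(p)}(\theta_0)$. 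For the $(1,1)$-entry, from $U_{n1}(\lambda)=N(D_n)/\lambda-|D_n|$ I get $\partial_\lambda U_{n1}(\lambda)=-N(D_n)/\lambda^2$, whence
\[
-\frac{1}{|D_n|}\partial_\lambda U_{n1}(\hat\lambda_n)=\frac{N(D_n)}{|D_n|\,\hat\lambda_n^2}=\frac{1}{\hat\lambda_n},
\]
because $\hat\lambda_n=N(D_n)/|D_n|$. Since $\hat\lambda_n\toP\lambda_0$ by Theorem \ref{consistency}, the continuous mapping theorem gives $1/\hat\lambda_n\toP 1/\lambda_0$, the required top-left entry.

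For the $(2,2)$-block I would write the per-tuple contribution $U_p(x_1,\ldots,x_p;\alpha)$ of the score so that $U_{n2}^{(p)}(\alpha)=\sum^{\neq}U_p$, and therefore $\partial_\alpha U_{n2}^{(p)}(\alpha)=\sum^{\neq}\partial_\alpha U_p$. Taking $\mathbb{E}_{\theta_0}$ through the $p$-th order intensity (this is the single-sum analogue of the double sum in Proposition \ref{var}, hence with $p$, not $2p$, variables) and performing the change of variables $u_1=x_1,\ u_k=x_k-x_1$ together with the boundary decomposition $(I)+(II)$ exactly as in the proof of Proposition \ref{var}, the boundary part $(II)$ is $O(|D_n\setminus D_n^{\ominus r}|/|D_n|)=o(1)$ and the interior part $(I)$ converges to
\[
-\int_{(\mathbb{R}^d)^{p-1}}\partial_\alpha U_p(0,u_2,\ldots,u_p;\alpha)\,\rho_{\theta_0,p}(0,u_2,\ldots,u_p)\,du_2\cdots du_p,
\]
which at $\alpha=\alpha_0$ is $\mathcal{I}_{22}^{(p)}(\theta_0)$; this establishes convergence of the mean.

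To upgrade the convergence of the mean to convergence in probability and, crucially, to permit substitution of the random argument $\hat\alpha_n^{(p)}$, I would prove that $-\tfrac{1}{|D_n|}\partial_\alpha U_{n2}^{(p)}(\cdot)$ converges uniformly in probability on a neighborhood of $\alpha_0$ to the continuous limit above. The ingredients are a variance bound obtained by normalizing the order-$p$ sum by $|D_n|$ and using the boundedness and continuity of $\partial_\alpha U_p$ (guaranteed by Assumption \ref{regularity}(ii),(iii), since $\det[C_\alpha]$ is bounded away from $0$ and all kernel derivatives are bounded) together with the ergodicity and mixing properties of $X$ to make the variance $o(1)$; and the Sobolev imbedding $W^{1,k}\hookrightarrow C(\Theta)$ of Assumption \ref{regularity}(iv), which converts $L^k(\Theta)$-control of the difference and of its $\alpha$-derivative into sup-norm control. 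Since $\hat\alpha_n^{(p)}\toP\alpha_0$ by Theorem \ref{consistency} and the limit is continuous in $\alpha$, uniform convergence then yields $-\tfrac{1}{|D_n|}\partial_\alpha U_{n2}^{(p)}(\hat\alpha_n^{(p)})\toP\mathcal{I}_{22}^{(p)}(\theta_0)$.

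The main obstacle is precisely this uniform-in-$\alpha$ step: pointwise convergence of the mean is routine once the machinery of Proposition \ref{var} is reused, but handling the random plug-in $\hat\alpha_n^{(p)}$ requires stochastic equicontinuity. I expect to obtain it by bounding the third-order $\alpha$-derivative of $\tfrac{1}{|D_n|}\mathbb{H}_{n2}^{(p)}$ uniformly using the bounded fourth derivatives of $K_\theta$ from Assumption \ref{regularity}(iii), and then invoking the Sobolev inequality of Assumption \ref{regularity}(iv) to pass from the $L^k$-convergence of the Hessian field and its gradient to uniform convergence over the compact parameter space.
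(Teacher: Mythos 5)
Your proposal is correct, but it is organized differently from the paper's own argument, which is much terser. The paper's proof ignores the block structure and the $\lambda$-entry entirely and concentrates on a single technical point: using Assumption \ref{regularity} (boundedness and continuity of $\partial_\alpha^l\tilde\rho_\alpha^{(p)}$, $l=0,1,2$, and the uniform lower bound on $\det[C_\alpha]$ and $K_{w,p}$), it establishes the equicontinuity property
\[
\sup_{|\alpha_1-\alpha_2|<\delta}\bigl|U_p(x_1,\ldots,x_p;\alpha_1)-U_p(x_1,\ldots,x_p;\alpha_2)\bigr|\,w_r(x_1,\ldots,x_p)\longrightarrow 0,\quad \delta\rightarrow 0,
\]
and then invokes the argument of \cite{guan2006}, where this modulus-of-continuity condition is exactly the hypothesis needed to upgrade the pointwise ergodic limit of the normalized Hessian to uniform convergence, after which consistency of $\hat\alpha_n^{(p)}$ permits the random plug-in. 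You instead make every step explicit and use different devices for the two key steps: you identify the limit by reusing the change-of-variables and boundary decomposition $(I)+(II)$ of Proposition \ref{var} (which the paper leaves implicit in the citation), and you obtain uniformity from variance bounds under Brillinger mixing combined with the Sobolev imbedding of Assumption \ref{regularity}(iv) — essentially a weaker-conclusion version of the moment bounds (M1)--(M2) that the paper only deploys later, in the proof of Theorem \ref{pldi DPP}. What the paper's route buys is brevity, by outsourcing the uniform law of large numbers to Guan's theorem; what your route buys is a self-contained proof that also covers the parts the paper silently omits: the block-diagonality coming from the fact that $U_{n1}$ depends only on $\lambda$ and $U_{n2}^{(p)}$ only on $\alpha$ in the stationary case, and the explicit computation $-|D_n|^{-1}\partial_\lambda U_{n1}(\hat\lambda_n)=1/\hat\lambda_n\rightarrow^p 1/\lambda_0$. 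One point you should still make explicit if you write this up: the per-tuple function $U_p(\,\cdot\,;\alpha)$ depends on $n$ through $\tilde K_{w,p}(r:\alpha)=\int_{D_n^p}\tilde\rho^{(p)}_\alpha w_r\,dx$, so your mean-convergence and equicontinuity estimates must be uniform in $n$; this follows from the same change of variables applied to $\tilde K_{w,p}$ itself, but it deserves a sentence.
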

\begin{proof}
Under Assumption \ref{regularity}, the functions
\[
\partial^l_\alpha \rho_{\alpha}^{(p)}(x_1,\ldots,x_p),\quad l =0,1,2
\]
are bounded and continuous functions in 
$\alpha$ and $(x_1,\ldots,x_p)$.
In addition, it is obvious that $\rho_{\alpha}^{(p)}(x_1,\ldots,x_p)$ and $K_{w,p}(r;\alpha)$ is bounded below from zero uniformly in $\alpha$ and $(x_1,\ldots,x_p)$.
These properties are sufficient condition to show that
\begin{eqnarray*}
\sup_{|\alpha_1 - \alpha_2| < \delta}
|U_p(x_1,\ldots,x_p;\alpha_1)-U_p(x_1,\ldots,x_p;\alpha_2) |w_r(x_1,\ldots,x_p)
\end{eqnarray*}
converges to $0$ as $\delta \rightarrow 0$.
We therefore obtain the convergence
(\ref{unif conv}) 
in a similar manner to \cite{guan2006}, 
which ends the proof.
\qed
\end{proof}
\section{Moment convergence of the estimator}\label{pldi}
\subsection{Moment convergence of the estimator}
In this section, we will show the moment convergence of the estimator $\hat{\lambda}_n$ and $\hat{\alpha}_n$ under a stationarity by using a polynomial large deviation inequality established by \cite{yoshida11}.
Noticing that under Assumptions \ref{krho} and \ref{regularity}, it follows from the ergodicity that there exists the limit 
$\widetilde{CL}^{(p)}$ 
\[
\widetilde{CL}^{(p)}(\alpha)= \lim_{n \rightarrow \infty} \mathbb{E}_{\theta_0}\left[
\frac{1}{|D_n|} CL_n^{(p)}(\alpha)\right]
\]
such that 
\[
\widetilde{CL}^{(p)}(\alpha)
=
\lim_{n \rightarrow \infty} \frac{1}{|D_n|} CL_n^{(p)}(\alpha),\quad a.s.,
\]
for every $\alpha \in \Theta_\alpha$.
The moment convergence of the estimator is given as follows.
\begin{thm}\label{pldi DPP}
Suppose that Assumptions \ref{krho} and \ref{regularity} are fulfilled 
and the matrix $\mathcal{I}_{22}^{(p)}$ 
defined in Lemma \ref{fisher conv}
is positive definite.
It holds for every polynomial growth function $f: \mathbb{R}^{q+1} \rightarrow \mathbb{R}$, the random sequence $\hat{u}_n = \sqrt{|D_n|}(\hat{\theta}_n - \theta_0)$ and the random variable 
$u \sim N\left(0, \mathcal{I}^{(p)-1}(\theta_0) \Sigma^{(p)}(\theta_0) \mathcal{I}^{(p)-1}(\theta_0)\right)$ that
\[
\lim_{n \rightarrow \infty} \mathbb{E}_{\theta_0}\left[f(\hat{u}_n)\right] = \mathbb{E}_{\theta_0}\left[f(u)\right].
\]
\end{thm}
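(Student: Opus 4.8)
The plan is to deduce the moment convergence from a polynomial type large deviation inequality (PLDI) in the sense of \cite{yoshida11}, applied to the normalized contrast random field. First I would introduce the total contrast $\mathbb{H}_n := \mathbb{H}_{n1} + \mathbb{H}_{n2}^{(p)}$ and the rescaled likelihood-ratio field
\[
\mathbb{Z}_n^{(p)}(u) = \exp\left\{ \mathbb{H}_n\left(\theta_0 + \frac{u}{\sqrt{|D_n|}}\right) - \mathbb{H}_n(\theta_0)\right\},
\]
where $u = \sqrt{|D_n|}(\theta - \theta_0)$ ranges over a neighbourhood growing with $n$. Because in the stationary case $\mathbb{H}_{n2}^{(p)}$ does not depend on $\lambda$, the field splits into a $\lambda$-block, whose maximizer $\hat{\lambda}_n$ is explicit and classically well-behaved, and an $\alpha$-block governed by $CL_n^{(p)}$; their joint fluctuation is coupled only through the off-diagonal covariance $\Sigma_{12}^{(p)}$ of Proposition \ref{var}, so treating them in a single joint field yields the full covariance claimed. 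The target of the PLDI is: for every $L>0$ there is $C_L$ with $\mathbb{P}[\,|\hat{u}_n| \geq R\,] \leq C_L R^{-L}$ for all large $R$, uniformly in $n$.

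Next I would verify the hypotheses of Yoshida's PLDI. The identifiability in Assumption \ref{regularity}(i), together with the positive definiteness of $\mathcal{I}_{22}^{(p)}$ and the diagonal entry $1/\lambda_0 > 0$ from Lemma \ref{fisher conv}, secures the nondegeneracy and separation of the limiting contrast $\widetilde{CL}^{(p)}$ introduced above, while the consistency of Theorem \ref{consistency} localizes the argmax near $\theta_0$. The analytic heart is a family of moment estimates: I must show
\[
\sup_n \mathbb{E}_{\theta_0}\left[\left|\frac{1}{\sqrt{|D_n|}}\, U_n^{(p)}(\theta_0)\right|^M\right] < \infty
\]
for every $M$, together with matching bounds for the normalized second and third $\theta$-derivatives of $\mathbb{H}_n$ and for their suprema over $\Theta$. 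To promote pointwise-in-$\theta$ bounds to uniform-in-$\theta$ bounds I would invoke the Sobolev embedding of Assumption \ref{regularity}(iv), estimating $\sup_\theta |g(\theta)|$ by the $L^k(\Theta)$ norms of $g$ and $\partial_\theta g$, which reduces everything to integrated moment estimates.

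The main obstacle is exactly these moment bounds, since the score and its derivatives are integrals against the factorial counting measures $N^{(p)}$, so their $M$-th moments reduce to integrals of joint intensities up to order $Mp$. Here I would exploit the two structural features of DPPs: every joint intensity is a determinant of $K_\theta$, hence bounded with bounded derivatives by Assumption \ref{regularity}(iii), and the Brillinger mixing and decay of correlations of stationary DPPs (from \cite{soshnikov2000} and \cite{brillinger mixing}) make the associated truncated cumulant measures absolutely integrable. Expanding the moments through their cumulant expansion and using that the weight $w_r$ restricts integration to the compact set $S_r^p$, the integrable-cumulant estimates yield that each $M$-th moment grows at most like $|D_n|^{M/2}$, so the normalization $|D_n|^{-1/2}$ keeps them bounded uniformly in $n$.

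Finally, once the PLDI is established, I would conclude as follows. The tail bound $\mathbb{P}[\,|\hat{u}_n| \geq R\,] \leq C_L R^{-L}$ valid for all $L$ implies $\sup_n \mathbb{E}_{\theta_0}[\,|\hat{u}_n|^M\,] < \infty$ for every $M$, so the family $\{f(\hat{u}_n)\}_n$ is uniformly integrable whenever $f$ has polynomial growth, being dominated by a fixed power of $|\hat{u}_n|$. The weak convergence $\hat{u}_n \toD u$ — obtained from the central limit theorem for the score via Brillinger mixing, the Hessian convergence of Lemma \ref{fisher conv}, and a Taylor/delta-method argument, with limiting covariance $\mathcal{I}^{(p)-1}(\theta_0)\,\Sigma^{(p)}(\theta_0)\,\mathcal{I}^{(p)-1}(\theta_0)$ — combined with this uniform integrability then yields $\mathbb{E}_{\theta_0}[f(\hat{u}_n)] \to \mathbb{E}_{\theta_0}[f(u)]$, which is the assertion.
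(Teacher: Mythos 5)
Your proposal is correct and follows essentially the same route as the paper: both reduce the claim to Yoshida's polynomial type large deviation inequality by verifying moment bounds on the score and the normalized contrast and its derivatives (made uniform over the parameter space via the Sobolev embedding of Assumption \ref{regularity}(iv) and controlled through Brillinger mixing of stationary DPPs), together with positive definiteness of $\mathcal{I}_{22}^{(p)}$ and the quadratic separation of the limiting contrast, i.e.\ the paper's conditions (M1)--(M4). Your final step (tail bounds $\Rightarrow$ uniform integrability, combined with the CLT for the score and Lemma \ref{fisher conv}) is exactly the mechanism inside the cited black box of \cite{yoshida11}, which the paper invokes rather than spells out.
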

To prove this theorem, it suffices to verify the following conditions (M1)-(M4) for any integer $L \geq 2 $.
See, e.g., \cite{yoshida11} or \cite{SZ2017} for details. 
\begin{itemize}
\item[(M1)]
The following inequalities hold true.
\[
\sup_{n \in \mathbb{N}} \mathbb{E}_{\theta_0} \left[
\left|\frac{1}{\sqrt{|D_n|}} \partial_\alpha CL_n^{(p)}(\alpha_0)\right|^{L}
\right] < \infty, 
\]
and 
\[
\sup_{n \in \mathbb{N}} \mathbb{E}_{\theta_0} \left[
\left|\sqrt{|D_n|} \left(Y_{n2}(\alpha, \alpha_0)-Y_2(\alpha, \alpha_0)\right)\right|^{L}
\right] < \infty,
\]
where 
\begin{eqnarray*}
Y_{n2}(\alpha, \alpha_0) &=& \frac{1}{|D_n|} CL_n^{(p)}(\alpha) - \frac{1}{|D_n|} CL_n^{(p)}(\alpha_0), \\
Y_2(\alpha, \alpha_0) &=& \widetilde{CL}^{(p)}(\alpha) - \widetilde{CL}^{(p)}(\alpha_0).
\end{eqnarray*}
\item[(M2)]
For $M > 0$, it holds that
\[
\sup_{n \in \mathbb{N}} \mathbb{E}_{\theta_0}\left[
\left(
\sup_{\alpha \in \Theta_\alpha} \frac{1}{|D_n|} \left|
\partial_\alpha^3 CL_n^{(p)}(\alpha)
\right|
\right)^M\right] < \infty, 
\]
and 
\[
\sup_{n \in \mathbb{N}} \mathbb{E}_{\theta_0}\left[
\left|
\sqrt{|D_n|}\left(
\mathcal{I}_{n22}^{(p)}(\alpha_0) - \mathcal{I}_{22}^{(p)}(\alpha_0)
\right)
\right|^{L}
\right] < \infty,
\]
where 
\[
\mathcal{I}_{n22}^{(p)}(\alpha) = - \partial^2_\alpha 
\left(\frac{1}{|D_n|} CL_n^{(p)}(\alpha)\right),\quad
\mathcal{I}_{22}^{(p)}(\alpha) = - \partial^2_\alpha \left(
\widetilde{CL}^{(p)}(\alpha)
\right).
\]
\item[(M3)]
The matrix $\mathcal{I}_{22}^{(p)}(\alpha_0)$ is positive definite.
\item[(M4)]
For every $\alpha \in \Theta_\alpha$, it holds that 
\[
Y_2(\alpha, \alpha_0) = Y_2(\alpha, \alpha_0) - Y_2(\alpha_0, \alpha_0) \lesssim -|\alpha - \alpha_0|^2.
\]
\end{itemize}
\begin{proof}[Proof of Theorem \ref{pldi DPP}]
It holds that 
\begin{eqnarray*}
\lefteqn{
\mathbb{E}_{\theta_0} \left[
\left|\frac{1}{\sqrt{|D_n|}} \partial_\alpha CL_n^{(p)}(\alpha_0)\right|^{L}
\right]} \\
&=&
|D_n|^{-\frac{L}{2}} \mathbb{E}_{\theta_0}\left[\left|
U_{n2}^{(p)}(\alpha_0)\right|^{L}
\right] \\
&=&
|D_n|^{-\frac{L}{2}} \int_{D_n^p} \left|
U_2^{(p)}(x_1,\ldots,x_p)\right|^L\lambda_0^p \tilde{\rho}_{\alpha_0}(x_1,\ldots,x_p)
dx_1\cdots dx_p
 \\
&\lesssim&
|D_n|^{1-\frac{L}{2}},
\end{eqnarray*}
which implies the first inequality in (M1).

To prove the second inequality, it is sufficient to prove that 
\[
\sup_{n \in \mathbb{N}} \mathbb{E}_{\theta_0} \left[\left(
\sup_{\alpha \in \Theta_\alpha} |D_n|^{\frac{L}{2}} 
\left|
\frac{1}{|D_n|} CL_n^{(p)}(\alpha) - \widetilde{CL}^{(p)}(\alpha)
\right|^{L} 
\right)\right] < \infty.
\]
By Sobolev's inequality in Assumption \ref{regularity}, 
we have that 
\begin{eqnarray*}
\lefteqn{
\mathbb{E}_{\theta_0} \left[\left(
\sup_{\alpha \in \Theta_\alpha} |D_n|^{\frac{L}{2}} 
\left|
\frac{1}{|D_n|} CL_n^{(p)}(\alpha) - \widetilde{CL}^{(p)}(\alpha)
\right|^{L} 
\right)\right]} \\
&\lesssim&
|D_n|^{\frac{L}{2}} \int_{\Theta_\alpha}
\mathbb{E}_{\theta_0} \left[
\left|
\frac{1}{|D_n|} CL_n^{(p)}(\alpha) - \widetilde{CL}^{(p)}(\alpha)
\right|^{L} 
\right] d\alpha \\
&&\ + |D_n|^{\frac{L}{2}} \int_{\Theta_\alpha}
\mathbb{E}_{\theta_0} \left[
\left|
\frac{1}{|D_n|} \partial_\alpha CL_n^{(p)}(\alpha) - \partial_\alpha \widetilde{CL}^{(p)}(\alpha)
\right|^{L} 
\right] d\alpha.
\end{eqnarray*}
Therefore, it holds for every $\alpha$ that 
\begin{eqnarray*}
\lefteqn{|D_n|^{\frac{L}{2}}
\mathbb{E}_{\theta_0}\left[\left|
\frac{1}{|D_n|} CL_n^{(p)}(\alpha) - \widetilde{CL}^{(p)}(\alpha)
\right|^{L}\right]} \\
&\lesssim&
|D_n|^{\frac{L}{2}}
\mathbb{E}_{\theta_0}\left[\left|
\frac{1}{|D_n|} CL_n^{(p)}(\alpha) - \mathbb{E}_{\theta_0}\left[\frac{1}{|D_n|}{CL_n}^{(p)}(\alpha)\right]
\right|^{L}\right] \\
&&\ + |D_n|^{\frac{L}{2}}
\left|
\mathbb{E}_{\theta_0}\left[\frac{1}{|D_n|}{CL_n}^{(p)}(\alpha)\right] - \widetilde{CL}^{(p)}(\alpha)
\right|^{L}.
\end{eqnarray*}
The first term in the right-hand side is $O(|D_n|^{1-L/2})$ by the Brillinger mixing condition for $X$.
As for second term, we have that 
\begin{eqnarray*}
\lefteqn{
\left|
\mathbb{E}_{\theta_0}\left[\frac{1}{|D_n|}{CL_n}^{(p)}(\alpha)\right] - \widetilde{CL}^{(p)}(\alpha)
\right|^{L}} \\
&\lesssim& |D_n|^{\frac{L}{2}}
\left|
\int_{S_r^p} \tilde{\rho}_{\alpha_0}^{(p)}(0, u_2,\ldots,u_p)
du_2\cdots du_p \right. \\
&& \left.
 - \frac{1}{|D_n|}\int_{D_n^{p} \cap S_r^{p}} \tilde{\rho}_{\alpha_0}^{(p)}(0, u_2,\ldots,u_p) du_1 \cdots du_p
\right|^{L}.
\end{eqnarray*}
Noticing that it holds that for $n \in \mathbb{N}$ large enough 
$S_r^p \subset D_n^p$, we have that the last term in the right-hand side is $0$.
Therefore, we can conclude that 
\[
\sup_{n \in \mathbb{N}}|D_n|^{\frac{L}{2}} \int_{\Theta_\alpha}
\mathbb{E}_{\theta_0} \left[
\left|
\frac{1}{|D_n|} CL_n^{(p)}(\alpha) - \widetilde{CL}^{(p)}(\alpha)
\right|^{L} 
\right] d\alpha < \infty
\]
since the parameter space $\Theta_\alpha$ is assumed to be compact.
Similarly, we have that 
\[\sup_{n \in \mathbb{N}}
|D_n|^{\frac{L}{2}} \int_{\Theta_\alpha}
\mathbb{E}_{\theta_0} \left[
\left|
\frac{1}{|D_n|} \partial_\alpha CL_n^{(p)}(\alpha) - \partial_\alpha \widetilde{CL}^{(p)}(\alpha)
\right|^{L} 
\right] d\alpha < \infty,
\]
which implies the second inequality in (M1).

Since condition (M2) can be verified similarly to (M1) and (M3) is one of the assumptions, 
we shall check the condition (M4).
From Taylor's expansion, we have that 
\begin{eqnarray*}
\lefteqn{
CL^{(p)}(\alpha) - CL^{(p)}(\alpha_0)}\\
&=& \lim_{n \rightarrow \infty} \mathbb{E}_{\theta_0}\left[\frac{1}{|D_n|}CL_n^{(p)}(\alpha) - \frac{1}{|D_n|}CL_n^{(p)}(\alpha_0)\right]\\
&=& \lim_{n \rightarrow \infty} \frac{1}{|D_n|}\left(\mathbb{E}_{\theta_0}\left[
\partial_\theta CL_n^{(p)}(\theta_0)^\top (\theta-\theta_0) + \frac{1}{2} (\theta - \theta_0)^\top \partial_\theta^2 CL_n^{(p)} (\tilde{\theta}) (\theta - \theta_0)
\right]\right)\\
&=& -\frac{1}{2} (\theta - \theta_0)^\top \mathcal{I}_{22}^{(p)}(\tilde{\alpha})(\alpha-\alpha_0)\\
&\lesssim& - |\alpha - \alpha_0|^2,
\end{eqnarray*}
where $\tilde{\alpha} = c \alpha + (1-c)\alpha_0$ for some $c \in [0,1]$.
This concludes (M4).
\qed
\end{proof}
We obtain the asymptotic normality and moment convergence of the estimator as a corollary of Theorem \ref{pldi DPP} as follows.
\begin{cor}\label{normality GCMLE}
Under the same assumptions as Theorem \ref{pldi DPP}, it holds that 
\[
\sqrt{|D_n|}(\hat{\theta}_n - \theta_0) \rightarrow^d N\left(0, \mathcal{I}^{(p)-1}(\theta_0) \Sigma^{(p)}(\theta_0) \mathcal{I}^{(p)-1}(\theta_0)\right)
\]
as $n \rightarrow \infty$.
Moreover, it holds that 
\[
\sup_{n \in \mathbb{N}}\mathbb{E}_{\theta_0}\left[
\left|\sqrt{|D_n|}\left(
\hat{\theta}_n^{(p)} - \theta_0\right)
\right|^L
\right] < \infty
\]
for every positive integer $L$.
\end{cor}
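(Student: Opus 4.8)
The plan is to deduce both assertions directly from Theorem \ref{pldi DPP}, which already supplies the convergence $\mathbb{E}_{\theta_0}[f(\hat{u}_n)] \to \mathbb{E}_{\theta_0}[f(u)]$ for every polynomial growth function $f$, where $\hat{u}_n = \sqrt{|D_n|}(\hat{\theta}_n - \theta_0)$ and $u \sim N(0, \mathcal{I}^{(p)-1}(\theta_0)\Sigma^{(p)}(\theta_0)\mathcal{I}^{(p)-1}(\theta_0))$. The key observation is that the single family of test functions appearing in the theorem is rich enough to encode both weak convergence and moment control: it contains all bounded continuous functions (which trivially have polynomial growth of degree $0$) as well as all power functions $x \mapsto |x|^L$. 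Both parts of the corollary are therefore obtained merely by specializing $f$.

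For the asymptotic normality, I would restrict attention to bounded continuous test functions. For each such $f$, Theorem \ref{pldi DPP} gives $\mathbb{E}_{\theta_0}[f(\hat{u}_n)] \to \mathbb{E}_{\theta_0}[f(u)]$ as $n \to \infty$. Since convergence of $\mathbb{E}_{\theta_0}[f(\hat{u}_n)]$ to $\mathbb{E}_{\theta_0}[f(u)]$ for every bounded continuous $f$ is exactly the Portmanteau characterization of weak convergence, this yields $\hat{u}_n \rightarrow^d u$, which is the claimed central limit statement $\sqrt{|D_n|}(\hat{\theta}_n - \theta_0) \rightarrow^d N(0, \mathcal{I}^{(p)-1}(\theta_0)\Sigma^{(p)}(\theta_0)\mathcal{I}^{(p)-1}(\theta_0))$.

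For the uniform moment bound, I would instead apply Theorem \ref{pldi DPP} to the function $f(x) = |x|^L$, which is continuous and of polynomial growth for each fixed positive integer $L$. The theorem then produces $\mathbb{E}_{\theta_0}[|\hat{u}_n|^L] \to \mathbb{E}_{\theta_0}[|u|^L]$, and the limit is finite because the Gaussian law has finite absolute moments of every order. A real sequence converging to a finite limit is bounded, so $\sup_{n \in \mathbb{N}} \mathbb{E}_{\theta_0}[|\hat{u}_n|^L] < \infty$, which is precisely the asserted estimate.

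Since the analytic content is entirely absorbed by Theorem \ref{pldi DPP}, there is no substantial obstacle here; the only point requiring care is verifying that the class of polynomial growth functions in the theorem simultaneously contains the bounded continuous functions used for weak convergence and the unbounded powers $|x|^L$ used for the moment estimate, so that both statements are legitimate instances of the same result. One should also record that the finiteness of each $\mathbb{E}_{\theta_0}[|\hat{u}_n|^L]$, needed before invoking boundedness of a convergent sequence, is already implicit in the conclusion of the theorem.
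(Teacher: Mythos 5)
Your proposal is correct and matches the paper's (implicit) reasoning exactly: the paper states this corollary without a separate proof, treating it as an immediate specialization of Theorem \ref{pldi DPP}, first to bounded continuous test functions (giving weak convergence by definition) and then to $f(x)=|x|^L$ (giving the uniform moment bound, since a convergent sequence of finite real numbers is bounded). Your added remark on the finiteness of each $\mathbb{E}_{\theta_0}[|\hat{u}_n|^L]$ is a sensible precaution, and it also follows from the compactness of $\Theta$ in Assumption \ref{regularity}(iv), which makes $|\hat{u}_n|$ bounded for each fixed $n$.
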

\subsection{Information criteria}\label{IC}
As application of the moment convergence of the estimator, we can derive an information criterion.
Since the explicit form of $\Sigma^{(p)}$ and $\mathcal{I}^{(p)}$ can be calculated, the bias of the estimator of $CL^{(p)}$ also can be calculated for every integer $p\geq2$, which yields the information criterion for the second order estimator $\hat{\alpha}_n^{(2)}$.
\begin{defi}\label{def IC}
An information criterion based on the second order composite likelihood is defined by 
\[
IC^{(2)} := -2CL_n^{(2)}(\hat{\alpha}_n^{(2)}) + 2\tr\left(\Sigma_{22}^{(2)}(\hat{\theta}_n^{(2)}) \mathcal{I}_{22}^{(2)-1}(\hat{\theta}_n^{(2)})\right),
\]
where $\hat{\theta}_n^{(2)}$ is the second order estimator for $\theta_0 = (\lambda_0, \alpha_0)$ and $\Sigma_{22}^{(2)}$ is defined in 
the proof of Proposition \ref{var}. 
\end{defi}
The information criteria $IC^{(2)}$ is an AIC-type criterion based on, not the true likelihood, but the composite likelihood. We can choose a model that has the smaller value of $IC^{(2)}$ in several competitive models. 

\begin{rem}
We can calculate the $\Sigma_{22}^{(2)}(\hat{\theta}_n^{(2)})$ and 
$\mathcal{I}_{22}^{(2)}(\hat{\theta}_n^{(2)})$ as follows
\begin{eqnarray}\label{asymptotic variance second}
\Sigma_{22}^{(2)}(\hat{\theta}_n)
&=& \iiint_{(\mathbb{R}^d)^3} U_2^{(2)}(0,u_2;\hat{\alpha}_n^{(2)}) U_2^{(2)\top}(u_3,u_4;\hat{\alpha}_n^{(2)})
\rho_{\hat{\theta}_n}^{(4)}(0,u_2,u_3,u_4)du_2 du_3 du_4 \nonumber \\
&& + 4 \iint_{(\mathbb{R}^d)^2} U_2^{(2)}(0,u_2;\hat{\alpha}_n^{(2)}) U_2^{(2)\top}(0,u_3;\hat{\alpha}_n^{(2)})
\rho_{\hat{\theta}_n^{(2)}}^{(3)}(0,u_2,u_3) du_2 du_3 \nonumber \\
&& + \int_{\mathbb{R}^d} U_2^{(2)}(0,u_2;\hat{\alpha}_n) U_2^{(2)\top}(0,u_2;\hat{\alpha}_n^{(2)}) \rho_{\hat{\theta}_n^{(2)}}^{(2)}(0,u_2) du_2.
\end{eqnarray}
\begin{eqnarray*}
\mathcal{I}_{22}^{(2)}(\hat{\theta}_n^{(2)})
&=& -\int_{\mathbb{R}^d} \partial_\theta U_2^{(2)}(0,u_2;\hat{\alpha}_n^{(2)}) \rho_{\hat{\theta}_n}^{(2)}(0,u_2) du_2.
\end{eqnarray*}
Note that since we can calculate $\Sigma^{(p)}(\hat{\theta}_n^{(p)})$ explicitly for every $p \geq 2$, we can define $IC^{(p)}$ similarly.
\end{rem}

 
It would be easy to understand the meaning of $IC^{(2)}$ if we consider the quantity 
\[
\widetilde{IC}^{(2)} := \frac{1}{|D_n|}CL_n^{(2)}(\hat{\alpha}_n^{(2)}) - \frac{1}{|D_n|}\tr\left(\Sigma_{22}^{(2)}(\theta_0) \mathcal{I}_{22}^{(2)-1}(\theta_0)\right),
\]
which is a bias-corrected estimtor of the composite likelihood $CL^{(2)}(\alpha_0)$ as shown in Theorem \ref{IC thm}, below. 
That is, $IC^{(2)}$ in Definition \ref{def IC} is the estimated version of $-2|D_n|^{-1}\widetilde{IC}^{(2)}$ since the unknown $\theta_0$ is replaced by $\hat{\theta}_n^{(2)}$. 
The model selection based on $IC^{(2)}$ is to choose the model whose composite likelihood (CL) is maximized, 
which implies that the corresponding model is closest to the truth in the sense of the Kullback-Leibler divergence of the composite likelihoods. 
However, since the maximum CL estimator, $|D_n|^{-1}CL_n^{(2)}(\hat{\alpha}_n^{(2)})$, has an unignorable bias: it can be show that, as $n\to \infty$, 
\[
\mathbb{E}_{\theta_0}\left[|D_n|^{-1}CL_n^{(2)}(\hat{\alpha}_n^{(2)}) - CL^{(2)}(\alpha_0)\right] = O(|D_n|^{-1}). 
\]
we should correct the bias to estimate the value of the composite likelihood.

\begin{thm}\label{IC thm}
Under Assumptions \ref{krho} and \ref{regularity}, it holds that
\begin{eqnarray}\label{IC asymp}
\mathbb{E}_{\theta_0}\left[\widetilde{IC}^{(2)} - CL^{(2)}(\alpha_0)\right] = o(|D_n|^{-1}),\quad n \rightarrow \infty. \label{eq:ic-bias}
\end{eqnarray}
That is, $\widetilde{IC}^{(2)}$ is the asymptotically unbiased estimator of $CL^{(2)}(\alpha_0)$.
\end{thm}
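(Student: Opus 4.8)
The statement is of AIC type: $\widetilde{IC}^{(2)}$ is meant to estimate the composite-likelihood risk of the fitted model, i.e. the value of the population contrast $\widetilde{CL}^{(2)}$ at the estimator read on an independent realization, whose leading value is $CL^{(2)}(\alpha_0)=\widetilde{CL}^{(2)}(\alpha_0)$. The plan is the standard two-source expansion. The naive plug-in $\ell_n(\hat\alpha):=|D_n|^{-1}CL_n^{(2)}(\hat\alpha)$ overstates this risk both because it is read on the same data used to fit (in-sample optimism) and because $\hat\alpha$ sits away from the maximiser $\alpha_0$ of the contrast (out-of-sample degradation); each defect will turn out to equal one half of $|D_n|^{-1}\tr(\Sigma_{22}^{(2)}\mathcal{I}_{22}^{(2)-1})$, and the subtracted term in $\widetilde{IC}^{(2)}$ cancels their sum. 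Throughout put $v_n=\sqrt{|D_n|}(\hat\alpha-\alpha_0)$ and recall that $CL_n^{(2)}$ does not involve $\lambda$, so only the lower-right blocks enter.

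For the in-sample optimism I would expand $\ell_n$ to second order about $\alpha_0$ and substitute the estimating equation (\ref{GMCLE score}), $U_{n2}^{(2)}(\hat\alpha)=0$, written as $U_{n2}^{(2)}(\alpha_0)=-\partial_\alpha^2 CL_n^{(2)}(\bar\alpha)(\hat\alpha-\alpha_0)$. Lemma \ref{fisher conv} gives $-|D_n|^{-1}\partial_\alpha^2 CL_n^{(2)}\toP\mathcal{I}_{22}^{(2)}(\alpha_0)$ at every intermediate point (all tending to $\alpha_0$ by Theorem \ref{consistency}), collapsing the increment to
\[
\ell_n(\hat\alpha)-\ell_n(\alpha_0)=\tfrac{1}{2|D_n|}\,v_n^\top\mathcal{I}_{22}^{(2)}(\alpha_0)\,v_n+R_n,
\]
with $R_n$ a third-order remainder; moreover $\mathbb{E}_{\theta_0}[\ell_n(\alpha_0)]=CL^{(2)}(\alpha_0)$ for $n$ large, since $S_r^p\subset D_n^p$ eventually as already noted in the proof of Theorem \ref{pldi DPP}. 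Applying the same expansion to the deterministic contrast, whose gradient vanishes and whose Hessian equals $-\mathcal{I}_{22}^{(2)}(\alpha_0)$ at $\alpha_0$, gives $\widetilde{CL}^{(2)}(\hat\alpha)-\widetilde{CL}^{(2)}(\alpha_0)=-\tfrac{1}{2|D_n|}v_n^\top\mathcal{I}_{22}^{(2)}(\alpha_0)v_n+o_p(|D_n|^{-1})$, which supplies the out-of-sample half.

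Taking expectations is where the convergence of moments is indispensable. Corollary \ref{normality GCMLE} makes $|v_n|^L$ uniformly integrable and gives $v_n\toD N(0,\mathcal{I}_{22}^{(2)-1}\Sigma_{22}^{(2)}\mathcal{I}_{22}^{(2)-1})$, so the quadratic functional converges in mean,
\[
\mathbb{E}_{\theta_0}\!\left[v_n^\top\mathcal{I}_{22}^{(2)}(\alpha_0)\,v_n\right]\longrightarrow\tr\!\left(\mathcal{I}_{22}^{(2)}\mathcal{I}_{22}^{(2)-1}\Sigma_{22}^{(2)}\mathcal{I}_{22}^{(2)-1}\right)=\tr\!\left(\Sigma_{22}^{(2)}\mathcal{I}_{22}^{(2)-1}\right),
\]
with $\Sigma_{22}^{(2)}$ furnished by Proposition \ref{var}. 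The two halves therefore sum to $|D_n|^{-1}\tr(\Sigma_{22}^{(2)}\mathcal{I}_{22}^{(2)-1})+o(|D_n|^{-1})$, which is exactly the term subtracted in $\widetilde{IC}^{(2)}$, yielding (\ref{eq:ic-bias}).

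The crux is analytic rather than algebraic: I must show that the expectation of the quadratic form genuinely converges to the Gaussian second moment (convergence in law alone does not suffice) and that $\mathbb{E}_{\theta_0}[|R_n|]=o(|D_n|^{-1})$. Both are delivered by the machinery behind Theorem \ref{pldi DPP}: the uniform moment bounds of conditions (M1)--(M2) control $\partial_\alpha^3 CL_n^{(2)}$ evaluated at the random argument, while the uniform integrability of $|v_n|^L$ from Corollary \ref{normality GCMLE} upgrades the distributional limit to a moment limit. I would thus spend most of the effort establishing $\mathbb{E}_{\theta_0}[|R_n|]=o(|D_n|^{-1})$, combining Hölder's inequality, the boundedness of the third derivatives from Assumption \ref{regularity}, and the $O_p(|D_n|^{-1/2})$ size of $\hat\alpha-\alpha_0$ together with its moment bounds.
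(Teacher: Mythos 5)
Your proposal is correct and follows essentially the same route as the paper's proof: the identical three-part decomposition (in-sample optimism, exact unbiasedness of the empirical contrast at $\alpha_0$ via $S_r^p \subset D_n^p$ eventually, and out-of-sample degradation of the population contrast), each half reduced by Taylor expansion to the quadratic form $\frac{1}{2|D_n|}v_n^\top\mathcal{I}_{22}^{(2)}(\alpha_0)v_n$, whose expectation is evaluated through the moment convergence of Theorem \ref{pldi DPP} and the trace identity $\tr\left(\mathcal{I}_{22}^{(2)}\mathcal{I}_{22}^{(2)-1}\Sigma_{22}^{(2)}\mathcal{I}_{22}^{(2)-1}\right)=\tr\left(\Sigma_{22}^{(2)}\mathcal{I}_{22}^{(2)-1}\right)$. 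If anything, your explicit attention to the third-order remainder and to uniform integrability is more careful than the paper's unelaborated $O(|D_n|^{-3/2})$ claims.
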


\begin{proof}
Let us evaluate the bias of $CL_n^{(2)}(\alpha_0)$:
\begin{eqnarray*}
Bias 
&=&\mathbb{E}_{\theta_0}\left[|D_n|^{-1}CL_n^{(2)}(\hat{\alpha}_n) - \int_{D_n} \log \left\{\frac{\tilde{\rho}^{(2)}_{\hat{\alpha}_n^{(2)}}(0,u)}{\tilde{K}_{w,2}(r:\hat{\alpha}_n)^{(2)}}\right\}w_r(0,u)\rho_{\theta_0}^{(2)}(0,u)du\right]\\
&=:& d_1 + d_2 + d_3,
\end{eqnarray*}
where 
\begin{eqnarray*}
d_1
&=& |D_n|^{-1}\mathbb{E}_{\theta_0}\left[CL_n^{(2)}(\hat{\alpha}_n^{(2)}) - CL_n^{(2)}(\alpha_0)\right],\\
d_2
&=& \mathbb{E}_{\theta_0}\left[|D_n|^{-1}CL_n^{(2)}(\alpha_0) - \int_{D_n}\log \left\{\frac{\tilde{\rho}^{(2)}_{{\alpha}_0}(0,u)}{\tilde{K}_{w,2}(r:{\alpha}_0)}\right\}w_r(0,u)\rho_{\theta_0}^{(2)}(0,u)du\right]
\end{eqnarray*}
and 
\begin{eqnarray*}
d_3
&=& \mathbb{E}_{\theta_0} \left[ \int_{D_n}\log \left\{\frac{\tilde{\rho}^{(2)}_{{\alpha}_0}(0,u)}{\tilde{K}_{w,2}(r:{\alpha}_0)}\right\}w_r(0,u)\rho_{\theta_0}^{(2)}(0,u)du \right.\\
&&\left. -\int_{D_n}\log \left\{\frac{\rho_{\hat{\alpha}_n^{(2)}}^{(2)}(0,u)}{\tilde{K}_{w,2}(r:\hat{\alpha}_n^{(2)})}\right\}w_r(0,u)\rho_{\hat{\theta}_n^{(2)}}^{(2)}(0,u)du \right].
\end{eqnarray*}
Then the proof ends if we show that 
\[
Bias = \frac{1}{|D_n|}\tr\left(\Sigma_{22}^{(2)}(\theta_0) \mathcal{I}_{22}^{(2)-1}(\theta_0)\right) + o(|D_n|^{-1}).
\]

First, it follows from the definition that $d_2 =0$.

Second, to evaluate $d_3$, we use Taylor expansion for the expected $\log$-composite likelihood and Theorem \ref{pldi DPP} to deduce that
\begin{eqnarray*}
d_3 
&=& \frac{1}{2} \mathbb{E}_{\theta_0}\left[
(\hat{\alpha}_n^{(2)} - \alpha_0)^\top \mathcal{I}_{22}^{(2)}(\theta_0) (\hat{\alpha}_n^{(2)} - \alpha_0)
\right] + O(|D_n|^{-\frac{3}{2}})\\
&=& 
\frac{1}{2}\mathbb{E}_{\theta_0}\left[\tr\left(\mathcal{I}_{22}^{(2)} (\theta_0)
(\hat{\alpha}_n^{(2)} - \alpha_0)  (\hat{\alpha}_n^{(2)} - \alpha_0)^\top \right)
\right] + O(|D_n|^{-\frac{3}{2}})\\
&=& 
\frac{1}{2 |D_n|} \tr\left(\mathcal{I}_{22}(\theta_0) 
\mathbb{E}_{\theta_0} \left[\left\{\sqrt{|D_n|}\left(\hat{\alpha}_n^{(2)} - \alpha_0\right)\right\}^{\otimes 2}\right]\right) + O(|D_n|^{-\frac{3}{2}})\\
&=& \frac{1}{2 |D_n|} \tr\left( \Sigma_{22}^{(2)}(\theta_0) \mathcal{I}_{22}^{(2)-1}(\theta_0)\right)
+ O(|D_n|^{-\frac{3}{2}}).
\end{eqnarray*}
Finally, as well as $d_3$, it holds that 
\[
d_1 = \frac{1}{2 |D_n|} \tr\left( \Sigma_2(\theta_0) \mathcal{I}_2^{-1}(\theta_0)\right)
+ O(|D_n|^{-\frac{3}{2}}).
\]
This competes the proof. 
\qed
\end{proof}
\begin{rem}
Although the estimator for bias is written in the closed form theoretically, the numerical computation of $IC^{(p)}\ (p \geq 2)$ is
not so easy since it has multiple integrals.
\end{rem}
\section{Numerical studies}\label{numerical}
In this section, we will illustrate the finite sample performance of the 
second order composite likelihood estimators for $d$-dimensional 
stationary parametric models of DPPs.
We consider three competing models of stationary DPPs with 
Gaussian kernel $K_\theta^G$, 
the Laplace kernel $K_\theta^L$ and 
the Cauchy kernel $K_\theta^C$ with known shape parameter $\nu$,
where $\theta = (\lambda, \alpha) \in \mathbb{R}^2$,
\[
K_\theta^G(x, y) = \lambda \exp \left(-
\frac{|x-y|^2}{\alpha^2}
\right),\quad x, y \in \mathbb{R}^2,
\]
\[
K_\theta^L(x, y) =
\lambda \exp\left(
- \frac{|x-y|}{\alpha}
\right),\quad x, y \in \mathbb{R}^2.
\]
and 
\[
K_\theta^C(x,y) =
\frac{\lambda}{\left(1 + |x-y|^2/\alpha^2\right)^{\nu + 1}},\quad x, y \in \mathbb{R}^2.
\]
These parametric models are introduced in \cite{lavancier16} and we can simulate samples from them by using the R package ``spatstat''.
The samples are generated in the rectangle
$[0,n] \times [0,n] \subset \mathbb{R}^2$ with $n=5$ and $n=10$ and 
we fix the shape parameter $\nu=1$ for the Cauchy kernel and the true parameter is given by $\lambda_0 = 10.0$ and $\alpha_0 = 0.1$.
Using 500 replications, we calculate the average and unbiased standard deviations of each second order composite likelihood estimators with $r=n/8$ given in Definition \ref{def of estimator}.

Tables 1-3 show the mean and unbiased standard deviation s.d. through
500 replications.
We see that the estimators work well for these parametric models since the standard deviation is very small.
However, $\hat{\alpha}_n^{(2)}$'s for the Laplace and the 
Cauchy DPPs seem to be under estimated.
This fact may indicate that we should consider the bias 
as is discussed in Subsection \ref{IC}.
The estimator of the bias introduced in Definition \ref{def IC} includes 
some multiple integrals which is difficult to compute.
Moreover, to estimate the shape parameter $\nu$ in the 
Cauchy DPP is 
difficult at least by using the second order composite likelihood method.
These kind of parameters may be estimated well by using the higher order composite likelihood method introduced in this paper. 
However, it will be computationally hard since estimating function for higher order estimator includes a multiple integral in $K$-function.
This should be studied more in the future.
\begin{table}[!h]
\begin{center}
\small
\begin{tabular}{|c|c|c|c|}\hline
Case &$n = 5$  &$n = 10$&True\\ 
$\hat{\lambda}_n$&9.93488&9.92392000&10.0\\
						&(0.57305932)&(0.295398636)& \\
$\hat{\alpha}_n^{(2)}$&0.09144804&0.09032759&0.1\\ 
						&(0.01642212)&(0.008114093)& \\\hline
\end{tabular}
\caption{Mean (s.d.) of $\hat{\lambda}_n, \hat{\alpha}_n^{(2)}$ 
thorough 500 replications for Gaussian DPPs.}
\label{Gauss simulation}
\end{center}
\end{table}
\begin{table}[!h]
\begin{center}
\small
\begin{tabular}{|c|c|c|c|}\hline
Case &$n = 5$  &$n = 10$&True\\ \
$\hat{\lambda}_n$&9.92704&9.91238&10.0\\
						&(0.58078383)  &(0.28089632)& \\
$\hat{\alpha}_n^{(2)}$&0.07595449&0.07253522&0.1\\ 
						&(0.02892498)&(0.01275654)& \\\hline
\end{tabular}
\caption{Mean (s.d.) of $\hat{\lambda}_n, \hat{\alpha}_n^{(2)}$ 
thorough 500 replications for the Laplace DPPs.}
\label{Laplace simulation}
\end{center}
\end{table}
\begin{table}[!h]
\begin{center}
\small
\begin{tabular}{|c|c|c|c|}\hline
Case &$n = 5$  &$n = 10$&True\\ 
$\hat{\lambda}_n$&9.98496&9.98212000&10.0\\
						&(0.5655134)  &(0.293259455)& \\
$\hat{\alpha}_n^{(2)}$&0.08814025&0.08522139&0.1\\ 
						&(0.01942937)&(0.009338469)& \\\hline
\end{tabular}
\caption{Mean (s.d.) of $\hat{\lambda}_n, \hat{\alpha}_n^{(2)}$ 
thorough 500 replications for the Cauchy DPPs ($\nu = 0.5$).}
\label{Cauchy simulation}
\end{center}
\end{table}
\newpage


\begin{thebibliography}{99}
\bibitem[\protect\citeauthoryear{Adams and Fournier}{2003}]{adams and fournier}
Adams, R.\ A.\ and Fournier, J.\ J.\ F.\ 
{\it Sobolev spaces}.
Pure and Applied Mathematics (Amsterdam), 
Elsevier/Academic Press, Amsterdam.
{\bf 140}, second edition, Elsevier/Academic Press, Amsterdam.
(2003).
\bibitem[\protect\citeauthoryear{Akaike}{1974}]{akaike}
Akaike, H.\ 
A new look at the statistical model identification.
{\it IEEE Trans. Automatic Control}. {\bf AC-19}, p.716-723. (1974).
\bibitem[\protect\citeauthoryear{Biscio and Lavancier}{2016}]{brillinger mixing}
Biscio, C.\ A.\ N. and Lavancier, F. 
Brillinger mixing of determinantal point processes and statistical applications.
{\it Electron. J. Stat.} {\bf 10}, no.1, p.582-607. (2016).
\bibitem[\protect\citeauthoryear{Biscio and Lavancier}{2017}]{cic dpp}
Biscio, C.\ A.\ N. and Lavancier, F. 
Contrast estimation for parametric stationary determinantal point processes.
{\it Scand. J. Stat.} {\bf 44}, no.1, p.204-229. (2017).
\bibitem[\protect\citeauthoryear{Clinet and Yoshida}{2017}]{clinet and yoshida}
Clinet, S.\ and Yoshida, N.
Statistical inference for ergodic point processes and application to limit order book.
\textit{Stochastic Process. Appl.} \textbf{127}, no.6, p.1800-1839. (2017)
\bibitem[\protect\citeauthoryear{Guan}{2006}]{guan2006}
Guan, Y.\ 
A composite likelihood approach in fitting spatial point process models.
{\it J. Amer. Statist. Assoc.} {\bf 101}, no.476, p.1502-1512. (2006).
\bibitem[\protect\citeauthoryear{Hough {\it et.\ al.\ }}{2009}]{hough2009}
Hough, J.\ B., Krishnapur, M., Peres, Y.\ and Vir\'{a}g, B.
\textit{Zeros of {G}aussian analytic functions and determinantal point processes.}
University Lecture Series.\ \textbf{51}, 
American Mathematical Society, Providence, RI.\ (2009).
\bibitem[\protect\citeauthoryear{Lavancier {\it et.\ al.\ }}{2016}]{lavancier16}
Lavancier, F., M\o ller, J.\ and Rubak, E.\ 
Determinantal point process models and statistical inference.
{\it J. R. Stat. Soc. Ser. B. Stat. Methodol.} {\bf 77}, no. 4, p.853-877. (2016).
\bibitem[\protect\citeauthoryear{Lavancier {\it et.\ al.\ }}{2018}]{lavancier18}
Lavancier, F., Poinas, A., and Waagepetersen, R.\ 
Adaptive estimating function inference for non-stationary determinantal 
point processes.
	arXiv:1806.06231 [math.ST] (2018).
\bibitem[\protect\citeauthoryear{Macchi}{1975}]{macchi}
Macchi, O.\ 
The coincidence approach to stochastic point processes.
\textit{Advances in Appl. Probability.} \textbf{7} p.83-122. (1975).
\bibitem[\protect\citeauthoryear{Shimizu and Zhang}{2017}]{SZ2017}
Shimizu, Y.\ and Zhang, Z.\ 
Estimating {G}erber-{S}hiu functions from discretely observed {L}\'{e}vy driven surplus.
\textit{Insurance Math. Econom.}\ \textbf{74}, p.84-98. (2017).
\bibitem[\protect\citeauthoryear{Soshnikov}{2000}]{soshnikov2000}
Soshnikov, A.\ 
Determinantal random point fields.
{\it Uspekhi Mat. Nauk.} {\bf 55}, no.5 (335), p.107-160. (2000).
\bibitem[\protect\citeauthoryear{Waagepetersen and Guan}{2009}]{waagepetersen2009}
Waagepetersen, R.\ and Guan, Y.\ 
Two-step estimation for inhomogeneous spatial point processes.
\textit{J. R. Stat. Soc. Ser. B Stat. Methodol.}
\textbf{71}.\ no.3, 685-702.\ (2009).
\bibitem[\protect\citeauthoryear{Yoshida}{2011}]{yoshida11} 
Yoshida, N.\ 
Polynomial type large deviation inequalities and quasi-likelihood analysis for stochastic differential equations.
\textit{Ann. Inst. Statist. Math.} \textbf{63}, no.3, 431-479. (2011).
\end{thebibliography}
\end{document}